\newtheorem{theorem}{Theorem}[section]
\newtheorem{proposition}[theorem]{Proposition}
\newtheorem{lemma}[theorem]{Lemma}
\theoremstyle{definition}
\newtheorem{example}[theorem]{Example}
\newcommand{\II}{\mathbb{I}}
\newcommand{\RR}{\mathbb{R}}
\newcommand{\CC}{\mathcal{C}}
\newcommand{\cQ}{\mathcal{Q}}
\title[The exact region determined by $\phi$, $\gamma$ and $\tau$]{The exact region determined by Spearman's footrule, Gini's gamma and Kendall's tau}
\author[D. Kokol Bukovšek]{Damjana Kokol Bukovšek \orcidlink{0000-0002-0098-6784} }
\address{University of Ljubljana, School of Economics and Business, and Institute of Mathematics, Physics and Mechanics, Ljubljana, Slovenia}
\email{Damjana.Kokol.Bukovsek@ef.uni-lj.si}
\author[P. Lazić]{Petra Lazić \orcidlink{0000-0003-3064-773X} } 
\address{University of Ljubljana, Faculty of Mathematics and Physics, Ljubljana, Slovenia, and University of Zagreb, Faculty of Science, Department of Mathematics, Zagreb, Croatia}
\email{Petra.Lazic@fmf.uni-lj.si}
\author[B. Mojškerc]{Blaž Mojškerc \orcidlink{0000-0001-8096-355X} }
\address{University of Ljubljana, School of Economics and Business, and Institute of Mathematics, Physics and Mechanics, Ljubljana, Slovenia}
\email{Blaz.Mojskerc@ef.uni-lj.si}
\author[N. Stopar]{Nik Stopar \orcidlink{0000-0002-0004-4957} } 
\address{University of Ljubljana, Faculty of Civil and Geodetic Engineering, University of Ljubljana, Faculty of Mathematics and Physics, and Institute of Mathematics, Physics and Mechanics, Ljubljana, Slovenia}
\email{Nik.Stopar@fgg.uni-lj.si}
\keywords{Copula; dependence concepts; concordance measure; Kendall's tau; Gini's gamma; Spearman's footrule; ordinal sum of copulas}
\subjclass[2020]{62H05, 62H20, 60E05}
\begin{document}

\begin{abstract}
Concordance measures are used to express the degree of association between random variables. Practitioners may use several distinct concordance measures to narrow the space of possible dependence structures. Consequently, the relations between different (weak) concordance measures have been extensively studied in recent years. The goal of this paper is to study the relation between Kendall's tau, Gini's gamma and Spearman's footrule. In particular,  we describe the exact region determined by these three measures, using shuffles of $M$ and ordinal sums of copulas. We also provide the formulas for five main (weak) concordance measures  and Chatterjee's xi of ordinal sums of copulas.
\end{abstract}

\maketitle

\section{Introduction}

Various concordance measures can be used to describe the association between random variables.  Given two random variables $X$ and $Y$, the concordance measure between them depends only on the copula associated with their joint distribution. Traditionally, Spearman's rho and Kendall's tau are the most commonly used in practice and there is an abundance of literature discussing their properties. Later, several other (weak) concordance measures were introduced, such as Gini's gamma, Blomqvist's beta and Spearman's footrule. The central interest of this paper are the relations between Kendall's tau, Gini's gamma and Spearman's footrule, therefore, we provide some references for these three. 
Kendall's tau was investigated in \cite{FuSch2,JaMa,KGJT,Wys},
Gini's gamma in \cite{CoNi,GeNeGh,Nels1998},
and Spearman's footrule in \cite{DiaGra,GeNeGh,UbFl,SSQ}.

Each concordance measure captures a specific aspect of association, so practitioners tend to use several of them to narrow the space of possible dependence structures. Hence, the relations between different (weak) concordance measures of copulas are of interest and have been extensively studied.

The problem of the exact region determined by Spearman's rho and Kendall's tau was open since 1960's and was solved in \cite{ScPaTr}. The relation between Blomqvist's beta and other (weak) concordance measures is relatively easy to describe (see \cite{KoBuKoMoOm3}), while relations between others are more challenging. The region determined by Gini's gamma and Spearman's footrule is characterized in \cite{KoBuMo}. The problem of the region determined by Spearman's rho and Spearman's footrule is partially solved in \cite{KoBuSt2}, see also \cite{TsScTr}. The lower bound for this region is determined exactly, while for the upper bound only a good approximation is known. The relations between Kendall's tau and other (weak) concordance measures are considered in \cite{KoBuSt}. The region determined by Spearman's rho  and Gini's gamma is still open.

In 2021, Chatterjee's coefficient of rank correlation (Chatterjee's xi) was introduced in \cite{Chat} and became extremely popular. It quantifies the strength of functional dependence between random variables $X$ and $Y$. Even though this is not a concordance measure, it has still been compared to common measures of concordance, see \cite{AnRo25, FuLiSc26, Rock25}.

The paper \cite{FuLiSc26} emphasizes the importance of studying higher-dimensional versions of the problem, namely, capturing the connections between more than two concordance measures simultaneously. The first treatment of a triplet of concordance measures, Blomqvist's beta, Spearman's footrule and Gini's gamma, was presented in \cite{KoBuMo26}. The goal of the present paper is to study the relation between Kendall's tau, Spearman's footrule and Gini's gamma. We first prove the lower and upper bound for Kendall's tau of a copula if its Spearman's footrule and Gini's gamma are given. Then we give several examples to demonstrate that these bounds are attained by shuffles of $M$. It turns out that the set of all triplets $(\phi(C), \gamma(C), \tau(C))$ for all copulas $C$ is a polyhedron.

To prove our results, we use ordinal sums of copulas, hence, we need the formulas expressing concordance measures of an ordinal sum of copulas in terms of concordance measures of its summands. We derive these formulas for all five most common (weak) concordance measures and Chatterjee's xi. These results may be of independent interest to the reader. Some of them already appeared in the literature without proofs.

The paper is structured as follows. In Section~\ref{sec:prelim} we introduce the notation, give the basic definitions and recall some results that we need later. In Section~\ref{sec:ordinal} we prove the formulas for concordance measures and Chatterjee's xi of an ordinal sum of copulas. In Section~\ref{sec:region} we present our main result, i.e., we describe the exact region determined by Spearman's footrule, Gini's gamma and Kendall's tau. Moreover, we provide examples of copulas attaining the boundary of the obtained region. We end the paper with some concluding remarks, including a discussion on the possible range of Kendall's tau when  Spearman's footrule and Gini's gamma of a copula are given.

\section{Preliminaries} \label{sec:prelim}

Let $\II$ be the unit interval $[0,1]\subseteq\RR$ and $u_1, u_2, v_1, v_2 \in \II$ be such that $u_1 \le u_2$ and $v_1 \le v_2$. The Cartesian product of intervals $R=[u_1,u_2]\times [v_1,v_2]$ is called a {\it rectangle} in $\II^2$. Let $H:\II^2 \to \RR$ be a real function. We define the $H$-volume of $R$ as
$V_H(R)=H(u_2,v_2)-H(u_2,v_1)-H(u_1,v_2)+H(u_1,v_1)$.
A {\it bivariate copula} is a function $C:\II^2 \to \II$ with the following properties:
\begin{itemize}
    \item $C(0,v)=C(u,0)=0$ for all $u,v \in \II$ ($C$ is {\it grounded}),
    \item $C(u,1)=u$ and $C(1,v)=v$ for all $u,v \in \II$ ($C$ has {\it uniform marginals}), and
    \item $V_C(R) \geq 0$ for every rectangle $R \subseteq \II^2$ ($C$ is {\it $2-$increasing}).
\end{itemize}
Bivariate copulas are functions of two variables which couple bivariate distribution functions with their one-dimensional marginal distribution functions, a famous Theorem by Sklar \cite{Sklar}.
The set of all bivariate copulas will be denoted by $\CC$. It is well known that this set is compact in the sup norm. For any copula $C$ its diagonal will be denoted by $\delta_C$ and its opposite diagonal by $\omega_C$, i.e., $\delta_C(u)=C(u,u)$ and $\omega_C(u)=C(u,1-u)$ for all $u \in \II$.

Given two copulas $C$ and $D$, we denote $C\le D$ if $C(u,v)\leqslant D(u,v)$ for all $(u,v)\in\II^2$. This is the so-called \textit{pointwise order} of copulas.
For any copula $C$ we have $W \le C \le M$, where $W(u,v)=\max\{0,u+v-1\}$ and $M(u,v)=\min\{u,v\}$ are the lower and upper \textit{Fr\'echet-Hoeffding bounds} for the set of all copulas.
Furthermore, any copula $C$ induces reflected copulas $C^{\sigma_1}$ and $C^{\sigma_2}$ defined by $C^{\sigma_1}(u,v) =v-C(1-u,v)$ and $C^{\sigma_2}(u,v)=u-C(u,1-v)$ for all $(u,v) \in \II^2$. Note that $\delta_{C^{\sigma_2}}(u)=u-\omega_C(u)$. 

Let $h \colon \II \to \II$ be a measure preserving bijection, where $\II$ is equipped with the Lebesgue measure $\lambda$. Then the function defined by
\begin{equation}\label{eq:measure_preserving}
C(u,v)=\lambda\big(\{t \in \II \mid t\le u, h(t)\le v \}\big)
\end{equation}
is a copula whose mass is concentrated on the graph of $h$, i.e., $\mu_C\big(\{(t,h(t)) \mid t \in \II\}\big)=1$.
Particular examples of such copulas are the so-called
\emph{shuffles of $M$}. A shuffle of $M$
$$ C = M(n, J, \pi, \omega)$$
is determined by a positive integer $n$, a partition
$J=\{J_1,J_2,\ldots, J_n\}$ of the interval $\II$ into $n$ pieces, where $J_i=[u_{i-1},u_i]$ and $0 =u_0 \le u_1 \le u_2 \le \ldots \le u_{n-1} \le u_n=1$, shortly written as $(n-1)$-tuple of splitting points $J=(u_1, u_2, \ldots, u_{n-1})$,
a permutation $\pi \in S_n$, written as $n$-tuple of images $\pi = (\pi(1), \pi(2), \ldots, \pi(n))$, and a mapping $\omega: \{1, 2, \ldots, n\} \to \{-1, 1\}$, written as $n$-tuple of images $\omega = (\omega(1), \omega(2), \ldots, \omega(n))$. The mass of $C$ is concentrated on the main diagonals of the squares $[u_{i-1},u_i] \times  [v_{\pi(i)-1} \times v_{\pi(i)}]$ where $0= v_0 \le v_1 \le v_2\le \ldots \le v_{n-1} \le v_n = 1$. 
Hence, $C$ is defined by the measure preserving bijection $h_C \colon \II \to \II$, given by
$$ h_C(u) =  \begin{cases}
            u + v_{\pi(i)-1} - u_{i-1}; &  u \in (u_{i-1}, u_i),\ \omega(i) = 1, \\
            v_{\pi(i)} +  u_{i-1} - u; &  u \in (u_{i-1}, u_i),\ \omega(i) = -1, \\
            v_i; & u= u_i.
            \end{cases}
$$
Furthermore, $C$ is the copula of uniformly distributed random variables $U$ and $V$ on $\II$ with the property $P(V=h_C(U)) =1$. For more details see \cite[\S3.2.3]{Nels}.

In \cite{Scar} Scarsini introduced formal axioms for \emph{concordance measures}.
These are mappings that assign to each copula a real number in $[-1,1]$ and are meant to measure the degree of concordance/discordance between the components of random vectors.
Recall that two observations $(x,y)$ and $(x',y')$ from a random vector $(X,Y)$ are concordant if $(x-x')(y-y')>0$ and discordant if $(x-x')(y-y')<0$.
For the formal definition of concordance measures and further details we refer the reader to \cite{DuSe} and \cite{Nels}. Here we give the properties of concordance measures, which we will need in the sequel: if $\kappa$ is a concordance measure, then $\kappa(M) = 1$, $\kappa(C^{\sigma_1}) = \kappa(C^{\sigma_2}) = -\kappa(C)$ for any copula $C \in \CC$, $\kappa$ is continuous with respect to the pointwise convergence, and $\kappa$ is monotone increasing with respect to the pointwise order.

Many of the most important bivariate concordance measures, including Kendall's tau and Gini's gamma, can be expressed with the so-called \emph{concordance function} $\cQ$, introduced by Kruskal \cite{Krus}.
If $(X_1,Y_1)$ and $(X_2,Y_2)$ are pairs of continuous random variables,
then the concordance function of random vectors $(X_1,Y_1)$ and $(X_2,Y_2)$ depends only on the corresponding copulas $C_1$ and $C_2$ and is given by (see \cite[Theorem 5.1.1]{Nels})
\begin{equation}\label{concordance} \cQ=\cQ(C_1,C_2)= 4 \int_{\II ^2} C_2(u,v) \, dC_1(u,v) -1.
\end{equation}
It turns out that the concordance function is symmetric in its arguments, i.e., $\cQ(C_1,C_2)=\cQ(C_2,C_1)$, and has several other useful properties, see \cite[Corollary 5.1.2]{Nels} and \cite[\S{3}]{KoBuKoMoOm2}.

The most important concordance measures include Spearman's rho, Kendall's tau, Gini's gamma and Blomqvist's beta. 
With the concordance function at hand, \textit{Spearman's rho} can be defined by
\begin{equation}\label{rho}
\rho(C)=3\cQ(C,\Pi)= 12\int_{\II ^2} C(u,v) \, du \, dv - 3,
\end{equation}
\textit{Kendall's tau} by
\begin{equation}\label{tau}
\tau(C)=\cQ(C,C)= 4\int_{\II ^2} C(u,v) \, dC(u,v) - 1,
\end{equation}
and \textit{Gini's gamma} by
\begin{equation}\label{gamma}
\gamma(C)=\cQ(C,M)+\cQ(C,W) = 4\int_0^1 \delta_C(u) \, du + 4\int_0^1 \omega_C(u) \, du - 2.
\end{equation}
Blomqvist's beta is defined as 
\begin{equation}\label{beta}
\beta(C)=4C(\tfrac12,\tfrac12)- 1.
\end{equation}
If $h_C \colon \II \to \II$ is a measure preserving bijection and $C$ a copula whose mass is concentrated on the graph of $h_C$, then $\tau(C)$ can be expressed as 
\begin{equation}\label{eq:tau-h}
\tau(C)=4\int_0^1 C(u,h_C(u)) \, du - 1.
\end{equation}

In \cite{Lieb} Liebscher considered \textit{weak concordance measures}, which are slight\-ly more general mappings than concordance measures (the formal definition can be found in Liebscher's paper).
The most important example of a weak concordance measure is \textit{Spearman's footrule} defined by
\begin{equation}\label{phi}
\phi(C)=\tfrac12\left(3\cQ(C,M)-1\right)=6\int_0^1 \delta_C(u) \, du - 2.
\end{equation}
Spearman's footrule is not a true concordance measure since its range is $[-\frac12,1]$.

Finally, we recall the definition of Chatterjee's xi, which is not a concordance measure, but rather a nonsymmetric coefficient of correlation.
Every copula $C\in\CC$ is increasing in each variable, hence partially differentiable almost everywhere. We denote by $\partial_1 C(u,v)$ the partial derivative of $C$ with respect to the first variable. One of the equivalent definitions of Chatterjee's coefficient of rank correlation is the following (see \cite{DeSiSt13, Rock25})
\begin{equation} \label{eq:xi}
    \xi(C)= 6 \int_{\II^2} (\partial_1 C(u,v))^2 \, du \, dv -2.
\end{equation}

In the next proposition we recall the relations between Kendall's tau, Gini's gamma and Spearman's footrule, which will be needed in the sequel.

\begin{proposition}[\cite{KoBuMo,KoBuSt}]\label{pro:old-results}
For any copula $C\in\CC$ we have
\begin{align}
    \tfrac43 \phi(C) - \tfrac13 &\leq \gamma(C) \leq \min\big\{\tfrac43 \phi(C)+\tfrac16, \tfrac23 \phi(C)+\tfrac13\big\}, \label{eq:gamma-phi}\\
    \tfrac43\phi(C)-\tfrac13 &\le \tau(C) \le \tfrac23\phi(C)+\tfrac13, \label{eq:tau-phi}\\
    \max\big\{\tfrac23 \gamma(C)-\tfrac13, 2\gamma(C)-1\big\} &\le \tau(C) \le \min\big\{\tfrac23 \gamma(C)+\tfrac13,2\gamma(C)+1\big\}.\label{eq:gamma-tau}
\end{align}
The bounds are attained by shuffles of $M$.
\end{proposition}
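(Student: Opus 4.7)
The plan is to reduce all three measures to integrals involving the concordance function $\cQ$ and the reflection $C^{\sigma_2}$, and to attack each bound via monotonicity of $\cQ$, elementary Stieltjes computations, and Fr\'echet--Hoeffding arguments on suitable push-forward distributions. First I would record $\cQ(C,M)=4\int\delta_C-1$ (using $\int M\,dC=E[\min(U,V)]=\int\delta_C$, which follows from $F_{\min(U,V)}(t)=2t-\delta_C(t)$), $\cQ(C,W)=4\int\omega_C-1$ (using $\int W\,dC=E[(U+V-1)_+]=\int\omega_C$), and, using $\delta_{C^{\sigma_2}}(u)=u-\omega_C(u)$, the identity $\gamma(C)=\tfrac{2}{3}\bigl(\phi(C)-\phi(C^{\sigma_2})\bigr)$.

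Through this last identity, the three inequalities in \eqref{eq:gamma-phi} become $\phi(C^{\sigma_2})\geq-\tfrac12$ (automatic from the range of $\phi$), $\int(\delta_C-\omega_C)\leq\tfrac14$, and $\int(\delta_C-\omega_C)\geq\tfrac18$. The $\tfrac14$ bound follows from the $2$-increasing estimate $\delta_C(u)-\omega_C(u)=V_C([0,u]\times[1-u,u])\leq 2u-1$ for $u\geq\tfrac12$ (the integrand being non-positive for $u\leq\tfrac12$). For the $\tfrac18$ bound the crucial observation is $M(u,v)-W(u,v)=\min(u,v,1-u,1-v)$; a layer-cake argument then gives
\[
 \int(\delta_C-\omega_C)=\int_{\II^2}(M-W)\,dC = E\bigl[\min\bigl(\min(U,1-U),\,\min(V,1-V)\bigr)\bigr]
\]
for $(U,V)\sim C$. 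Since both inner minima are uniformly distributed on $[0,\tfrac12]$, the Fr\'echet--Hoeffding lower bound for $E[\min(X,Y)]$ over all couplings of such marginals equals $\tfrac18$, attained by the anti-comonotone coupling $X+Y=\tfrac12$.

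For \eqref{eq:tau-phi}, monotonicity of $\cQ$ gives the upper bound $\tau(C)=\cQ(C,C)\leq\cQ(C,M)=\tfrac{2}{3}\phi+\tfrac13$. For the matching lower bound I would use the trivial pointwise inequality $C(u,v)\geq\delta_C(\min(u,v))$ and the Stieltjes identity
\[
 E[\delta_C(\min(U,V))]=\int_0^1\delta_C(t)\,d(2t-\delta_C(t))=2\int\delta_C-\tfrac12,
\]
obtaining $\tau+1\geq 8\int\delta_C-2$, i.e., $\tau\geq\tfrac{4}{3}\phi-\tfrac13$. The four inequalities in \eqref{eq:gamma-tau} are then deduced by combining the bounds proved so far with the reflection identities $\tau(C^{\sigma_2})=-\tau(C)$ and $\gamma(C^{\sigma_2})=-\gamma(C)$: for instance $\tau\geq 2\gamma-1$ is immediate from $\tau\geq\tfrac{4}{3}\phi-\tfrac13$ and $\phi\geq\tfrac{3\gamma-1}{2}$ (the inverted form of $\gamma\leq\tfrac{2}{3}\phi+\tfrac13$), and $\tau\leq 2\gamma+1$ follows by applying this to $C^{\sigma_2}$. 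Sharpness is verified by computing $\phi$, $\gamma$, $\tau$ for explicit shuffles of $M$ via \eqref{eq:measure_preserving} and \eqref{eq:tau-h}.

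The hardest step will be the Fr\'echet--Hoeffding identification used for $\int(\delta_C-\omega_C)\geq\tfrac18$: decomposing $M-W$ as a four-coordinate minimum and then recognising that the resulting expected-minimum has both arguments uniform on $[0,\tfrac12]$ is what allows invoking the sharp anti-comonotone coupling. The tight $\tfrac{2}{3}\gamma$-type bounds on $\tau$ (namely $\tau\leq\tfrac{2}{3}\gamma+\tfrac13$ and $\tau\geq\tfrac{2}{3}\gamma-\tfrac13$) reduce to the companion inequality $\int(2\delta_C-\omega_C)\geq\tfrac12$, which does not follow from pointwise comparisons and requires an analogous but more delicate expected-minimum representation---that is the secondary technical point I would have to pin down.
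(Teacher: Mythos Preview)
The paper does not prove this proposition; it is stated with citations to \cite{KoBuMo} and \cite{KoBuSt}, so there is no in-paper argument to compare against. Your plan therefore supplies a self-contained proof, and most of it is correct: the identity $\gamma=\tfrac23(\phi-\phi(C^{\sigma_2}))$ is exactly the paper's later Lemma~\ref{lem:gamma-phi}; your reduction of \eqref{eq:gamma-phi} to $\phi(C^{\sigma_2})\ge-\tfrac12$ together with $\int(\delta_C-\omega_C)\in[\tfrac18,\tfrac14]$ is right; the Fr\'echet--Hoeffding argument for the $\tfrac18$ bound via the representation $M-W=\min(u,v,1-u,1-v)$ is clean; and your proof of \eqref{eq:tau-phi} via $C(u,v)\ge\delta_C(\min(u,v))$ and the Stieltjes computation is sound.

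The one genuine error is in your last paragraph. The bounds $\tau\le\tfrac23\gamma+\tfrac13$ and $\tau\ge\tfrac23\gamma-\tfrac13$ cannot ``reduce to $\int(2\delta_C-\omega_C)\ge\tfrac12$'': that inequality involves only $\delta_C$ and $\omega_C$, hence only $\phi$ and $\gamma$ (concretely it is equivalent to $\gamma\le 2\phi$, which fails e.g.\ at any copula realising $(\phi,\gamma)=(-\tfrac12,-\tfrac12)$), whereas $\tau$ is not determined by the diagonal sections. More importantly, no new work is needed. You already have $\tau\le\tfrac23\phi+\tfrac13$, and applying the lower half of \eqref{eq:tau-phi} to $C^{\sigma_2}$ gives $\tau\le -\tfrac43\phi+2\gamma+\tfrac13$ (this is precisely the content of the paper's Lemma~\ref{lem:bounds}). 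Taking the convex combination with weights $\tfrac23$ and $\tfrac13$ yields exactly $\tau\le\tfrac23\gamma+\tfrac13$; reflecting once more gives $\tau\ge\tfrac23\gamma-\tfrac13$. So what you flagged as the ``secondary technical point to pin down'' is a phantom gap, and the reduction you proposed points in the wrong direction.
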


\section{Concordance measures of an ordinal sum} \label{sec:ordinal}

In this section we give the formulas for five (weak) concordance measures (Spearman's rho, Kendall's tau, Spearman's footrule, Gini's gamma, and Blomqvist's beta), and Chatterjee's xi of an ordinal sum of copulas.

Let $\{(a_k, b_k) \mid k=1,2, \ldots, n\}$ be a finite family of disjoint open subintervals of $\II$ and $\{B_k \mid k=1,2, \ldots, n\}$ a family of copulas. Then the \emph{ordinal sum} (sometimes also called \emph{$M$-ordinal sum}) $B$ of $\{B_k \mid k=1,2, \ldots, n\}$ with respect to $\{(a_k, b_k) \mid k=1,2, \ldots, n\}$ is a copula defined by 
\begin{equation} \label{eq:ordinal_sum}
B(u, v) = \begin{cases}
		a_k + (b_k-a_k)B_k\big(\frac{u-a_k}{b_k-a_k}, \frac{v-a_k}{b_k-a_k}\big); & (u, v) \in [a_k, b_k]^2, k=1, 2, \dots, n,\\
			\min\{u,v\}; & \text{otherwise},
    \end{cases}
\end{equation}
(see \cite{DuKlSaSe}, compare also \cite[\S3.2.2]{Nels}).

We start with the formula for Spearman's rho. It already appeared in \cite{KoBuSt2}, but without a proof, so we give the proof here. 

\begin{proposition}\label{prop:ordinal-rho}
   Spearman's rho of an ordinal sum $B$, given in \eqref{eq:ordinal_sum}, is
    \begin{equation} \label{eq:ordinal-rho}
        \rho(B) =  1 - \sum_{k=1}^n (b_k-a_k)^3(1-\rho(B_k)). 
    \end{equation}  
\end{proposition}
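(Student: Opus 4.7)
The plan is to start from the integral formula $\rho(C)=12\int_{\II^2}C\,du\,dv-3$ and exploit the observation that $B$ and $M$ agree off the union of the squares $[a_k,b_k]^2$, so the difference $\int_{\II^2}(B-M)\,du\,dv$ reduces to a sum of integrals over these squares.

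More concretely, since $M(u,v)=\min\{u,v\}$ coincides with the ``otherwise'' branch of \eqref{eq:ordinal_sum}, I would write
\begin{equation*}
\int_{\II^2} B(u,v)\,du\,dv = \int_{\II^2} M(u,v)\,du\,dv + \sum_{k=1}^n \int_{[a_k,b_k]^2}\bigl(B(u,v)-M(u,v)\bigr)\,du\,dv,
\end{equation*}
since the integrands agree outside $\bigcup_k[a_k,b_k]^2$. On the square $[a_k,b_k]^2$, I note that $M(u,v)=a_k+(b_k-a_k)M\bigl(\tfrac{u-a_k}{b_k-a_k},\tfrac{v-a_k}{b_k-a_k}\bigr)$, so using the definition of $B$ we have $B(u,v)-M(u,v)=(b_k-a_k)\bigl(B_k(s,t)-M(s,t)\bigr)$ with $s=(u-a_k)/(b_k-a_k)$, $t=(v-a_k)/(b_k-a_k)$.

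Next, the change of variables $(u,v)\mapsto(s,t)$ gives a Jacobian factor $(b_k-a_k)^2$, so each summand becomes $(b_k-a_k)^3\int_{\II^2}(B_k-M)\,ds\,dt$. Using $\int_{\II^2}M\,ds\,dt=\tfrac13$ together with $\int_{\II^2}B_k\,ds\,dt=\tfrac{\rho(B_k)+3}{12}$, this integral equals $\tfrac{\rho(B_k)-1}{12}$. Plugging everything back in,
\begin{equation*}
\rho(B) = 12\Bigl(\tfrac13 + \sum_{k=1}^n(b_k-a_k)^3\cdot\tfrac{\rho(B_k)-1}{12}\Bigr) - 3 = 1 - \sum_{k=1}^n(b_k-a_k)^3\bigl(1-\rho(B_k)\bigr),
\end{equation*}
which is the claimed formula. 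There is no real obstacle; the only point requiring mild care is the bookkeeping that $B=M$ on the complement of $\bigcup_k[a_k,b_k]^2$ (which is why the ``otherwise'' contribution exactly cancels against the corresponding piece of $\int M$), and the correct scaling of the Jacobian that produces the cubic factor $(b_k-a_k)^3$.
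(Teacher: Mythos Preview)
Your proof is correct and follows essentially the same approach as the paper: write $\int_{\II^2}B=\int_{\II^2}M+\sum_k\int_{[a_k,b_k]^2}(B-M)$, then evaluate each square integral via the affine substitution. Your version is marginally cleaner in that you use the scaling identity $M(u,v)=a_k+(b_k-a_k)M(s,t)$ on $[a_k,b_k]^2$ to pass directly to $(b_k-a_k)^3\int_{\II^2}(B_k-M)$, whereas the paper computes $\int_{[a_k,b_k]^2}B$ and $\int_{[a_k,b_k]^2}M$ separately before combining.
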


\begin{proof} Since $B(u,v) = M(u,v)$ outside the union of squares $[a_k, b_k]^2$, we have
\begin{align*}
\int_{\II^2} B(u,v) \, du\, dv &= \int_{\II^2} M(u,v) \, du\, dv + \int_{\II^2} (B(u,v)-M(u,v)) \, du \, dv\\
    &= \tfrac13 + \sum_{k=1}^n  \int_{[a_k, b_k]^2} (B(u,v)-M(u,v)) \, du\, dv \\
    &= \tfrac13 + \sum_{k=1}^n  \int_{[a_k, b_k]^2} B(u,v) \, du\, dv - \sum_{k=1}^n  \int_{[a_k, b_k]^2} M(u,v) \, du \, dv \\
    &= \tfrac13 + \sum_{k=1}^n  \int_{[a_k, b_k]^2} B(u,v) \, du\, dv - \sum_{k=1}^n (b_k-a_k)^2 (\tfrac23 a_k+ \tfrac13 b_k).
\end{align*}
Since
\begin{align*}
\int_{[a_k, b_k]^2} B(u, v) \, du \, dv &= \int_{[a_k, b_k]^2} \big(a_k + (b_k-a_k)B_k(\tfrac{u-a_k}{b_k-a_k},\tfrac{v-a_k}{b_k-a_k})\big) \, du \, dv \\ 
&= a_k(b_k-a_k)^2 + (b_k-a_k)^3\int_{\II^2}B_k(t,s) \, dt \, ds \\ 
&= a_k(b_k-a_k)^2 + (b_k-a_k)^3\cdot\frac{\rho(B_k)+3}{12},
\end{align*}
we obtain
\begin{align*}
\rho(B) &= 12\int_{\II^2} B(u,v) \, du \, dv -3\\  
&= 1 + 12\sum_{k=1}^n  \int_{[a_k, b_k]^2} B(u,v) \, du \, dv - 12\sum_{k=1}^n (b_k-a_k)^2 (\tfrac23 a_k+ \tfrac13 b_k) \\ 
&= 1 + \sum_{k=1}^n \left(12a_k(b_k-a_k)^2 -(b_k-a_k)^2 (8 a_k+ 4 b_k) + (b_k-a_k)^3(\rho(B_k)+3) \right) \\ 
&= 1 + \sum_{k=1}^n \left(-4(b_k-a_k)^3 + (b_k-a_k)^3(\rho(B_k)+3) \right) \\ 
&= 1 - \sum_{k=1}^n  (b_k-a_k)^3(1-\rho(B_k)),   
\end{align*}
which finishes the proof.
\end{proof}

We continue with the formula for Kendall's tau of an ordinal sum. 

\begin{proposition}\label{prop:ordinal-tau}
   Kendall's tau of an ordinal sum $B$, given in \eqref{eq:ordinal_sum}, is
    \begin{equation} \label{eq:ordinal-tau}
        \tau(B) =  1 - \sum_{k=1}^n (b_k-a_k)^2(1-\tau(B_k)). 
    \end{equation}  
\end{proposition}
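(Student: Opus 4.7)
The plan is to imitate the proof just given for Spearman's rho, but working with the Kendall's tau formula $\tau(B)=4\int_{\II^2}B(u,v)\,dB(u,v)-1$ instead. The key observation is that $B$ coincides with $M$ outside the union of the squares $[a_k,b_k]^2$, so both $B$ and the measure $dB$ decompose naturally over these squares and their complement.

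Concretely, I would split
\[
\int_{\II^2} B\,dB = \int_{\II^2\setminus\bigcup_k[a_k,b_k]^2} M\,dM + \sum_{k=1}^n \int_{[a_k,b_k]^2} B\,dB.
\]
Since the support of $M$ is the main diagonal and $M(u,u)=u$, the first integral equals $\int_{[0,1]\setminus\bigcup_k[a_k,b_k]} u\,du=\tfrac12-\sum_k\tfrac{b_k^2-a_k^2}{2}$. For each square, I would change variables via $s=\frac{u-a_k}{b_k-a_k}$, $t=\frac{v-a_k}{b_k-a_k}$; under this map the measure $dB$ restricted to $[a_k,b_k]^2$ is the pushforward of $(b_k-a_k)\,dB_k$ (one factor of $(b_k-a_k)$ rather than two, because $B_k$ has total mass $1$ while the restriction of $B$ to $[a_k,b_k]^2$ has total mass $b_k-a_k$, as follows from the volume relation $V_B([a_k,u]\times[a_k,v])=(b_k-a_k)V_{B_k}([0,s]\times[0,t])$). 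This yields
\[
\int_{[a_k,b_k]^2}B\,dB = a_k(b_k-a_k)+(b_k-a_k)^2\int_{\II^2}B_k(s,t)\,dB_k(s,t)=a_k(b_k-a_k)+(b_k-a_k)^2\cdot\tfrac{\tau(B_k)+1}{4}.
\]

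Combining the pieces and using the algebraic identity $\tfrac{b_k^2-a_k^2}{2}-a_k(b_k-a_k)=\tfrac{(b_k-a_k)^2}{2}$, the expression collapses to
\[
\int_{\II^2}B\,dB = \tfrac12-\sum_{k=1}^n(b_k-a_k)^2\cdot\tfrac{1-\tau(B_k)}{4},
\]
and multiplying by $4$ and subtracting $1$ gives the desired formula.

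The only delicate step is the change-of-variables computation for $dB$ on $[a_k,b_k]^2$, where it is easy to lose a factor of $b_k-a_k$; I would justify it either via the rectangle-volume relation above or by noting that $B$ is the law of a random vector that, conditionally on landing in $[a_k,b_k]^2$ (an event of probability $b_k-a_k$), is an affine rescaling of $B_k$. Everything else is bookkeeping analogous to the Spearman's rho argument.
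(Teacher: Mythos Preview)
Your proposal is correct and follows essentially the same approach as the paper's proof: both split $\int_{\II^2}B\,dB$ over the squares $[a_k,b_k]^2$ and their complement, evaluate the complement via the diagonal support of $M$, change variables on each square to obtain $a_k(b_k-a_k)+(b_k-a_k)^2\cdot\tfrac{\tau(B_k)+1}{4}$, and then collect terms. If anything, you are slightly more careful than the paper in explaining why the pushforward measure carries a single factor of $b_k-a_k$.
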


\begin{proof}
Since $B(u,v) = M(u,v)$ outside the union of squares $[a_k, b_k]^2$, we calculate
\begin{align*}
\tau(B) &= - 1 + 4\int_{\II^2} B(u, v) \, dB(u,v) \\
&= -1 + 4 \sum_{k=1}^n  \int_{[a_k, b_k]^2} B(u, v) \, dB(u,v)  + 4 \int_{\II\setminus\cup_{k=1}^n(a_k,b_k)} u \, du\\
&= -1 + 4 \sum_{k=1}^n  \int_{[a_k, b_k]^2} B(u, v) \, dB(u,v) + 4 \int_0^1 u \, du - 4 \sum_{k=1}^n \int_{a_k}^{b_k} u \, du \\
&= 1 + 4 \sum_{k=1}^n  \int_{[a_k, b_k]^2} B(u, v) \, dB(u,v) - 2 \sum_{k=1}^n (b_k^2-a_k^2).
\end{align*}
Now,
\begin{align*}
\int_{[a_k, b_k]^2} B(u, v) \, dB(u,v) &= \int_{[a_k, b_k]^2} \big(a_k + (b_k-a_k)B_k(\tfrac{u-a_k}{b_k-a_k},\tfrac{v-a_k}{b_k-a_k})\big) \, dB(u,v) \\ 
&= \int_{[a_k, b_k]^2} a_k \, dB(u,v) + (b_k-a_k)\int_{[a_k, b_k]^2}B_k\big(\tfrac{u-a_k}{b_k-a_k},\tfrac{v-a_k}{b_k-a_k}\big) \, dB(u,v) \\ 
&= \int_{[a_k, b_k]^2} a_k(b_k-a_k) \, dB_k(\tfrac{u-a_k}{b_k-a_k},\tfrac{v-a_k}{b_k-a_k}) \\
&\qquad + (b_k-a_k)^2\int_{[a_k, b_k]^2}B_k(\tfrac{u-a_k}{b_k-a_k},\tfrac{v-a_k}{b_k-a_k}) \, dB_k(\tfrac{u-a_k}{b_k-a_k},\tfrac{v-a_k}{b_k-a_k}) \\ 
&= a_k(b_k-a_k) \int_{\II^2} dB_k(t,s) + (b_k-a_k)^2\int_{\II^2}B_k(t,s) \, dB_k(t,s) \\ 
&= a_k(b_k-a_k) + (b_k-a_k)^2 \cdot \frac{\tau(B_k)+1}{4}.
\end{align*}
We obtain
\begin{align*}
\tau(B) &= 1 + 4 \sum_{k=1}^n  a_k(b_k-a_k) + 4 \sum_{k=1}^n  (b_k-a_k)^2\cdot\frac{\tau(B_k)+1}{4} - 2 \sum_{k=1}^n (b_k^2-a_k^2) \\  
&= 1 -2 \sum_{k=1}^n  (b_k-a_k)^2 + \sum_{k=1}^n  (b_k-a_k)^2(\tau(B_k)+1)  \\    
&= 1 - \sum_{k=1}^n  (b_k-a_k)^2(1-\tau(B_k)),   
\end{align*}
which finishes the proof.
\end{proof}

A version of the formula for Spearman's footrule of an ordinal sum appeared already in \cite{TsScTr}, under the assumption that the union of the family of closed subintervals $\{[a_k, b_k] \mid k=1,2, \ldots, n\}$ equals the whole interval $\II$. Here we prove it for the general case. 

\begin{proposition}\label{prop:ordinal-phi}
   Spearman's footrule of an ordinal sum $B$, given in \eqref{eq:ordinal_sum}, is
    \begin{equation} \label{eq:ordinal-phi}
        \phi(B) =  1 - \sum_{k=1}^n (b_k-a_k)^2(1-\phi(B_k)). 
    \end{equation}  
\end{proposition}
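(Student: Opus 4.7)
The plan is to mimic the strategy used in the preceding two propositions, but applied to the diagonal formula for $\phi$ rather than the double integral expressions. Recall from \eqref{phi} that $\phi(C)=6\int_0^1\delta_C(u)\,du-2$, so the entire proof reduces to computing $\int_0^1\delta_B(u)\,du$.

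First I would observe that on the complement of $\bigcup_{k=1}^n[a_k,b_k]$ the copula $B$ equals $M$, hence $\delta_B(u)=u$ there, and split
\begin{equation*}
\int_0^1\delta_B(u)\,du=\int_0^1 u\,du-\sum_{k=1}^n\int_{a_k}^{b_k}u\,du+\sum_{k=1}^n\int_{a_k}^{b_k}\delta_B(u)\,du.
\end{equation*}
The first two terms contribute $\tfrac12-\tfrac12\sum_{k=1}^n(b_k^2-a_k^2)$. On each square $[a_k,b_k]^2$ the definition \eqref{eq:ordinal_sum} yields $\delta_B(u)=a_k+(b_k-a_k)\,\delta_{B_k}\!\bigl(\tfrac{u-a_k}{b_k-a_k}\bigr)$, and a linear substitution $t=(u-a_k)/(b_k-a_k)$ converts the $k$-th integral to $a_k(b_k-a_k)+(b_k-a_k)^2\int_0^1\delta_{B_k}(t)\,dt$. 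Using \eqref{phi} in reverse, the latter integral equals $(\phi(B_k)+2)/6$.

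Combining everything, the terms $a_k(b_k-a_k)-\tfrac12(b_k^2-a_k^2)$ collapse to $-\tfrac12(b_k-a_k)^2$, so
\begin{equation*}
\int_0^1\delta_B(u)\,du=\tfrac12+\sum_{k=1}^n(b_k-a_k)^2\!\left(\tfrac{\phi(B_k)+2}{6}-\tfrac12\right)=\tfrac12-\tfrac16\sum_{k=1}^n(b_k-a_k)^2\bigl(1-\phi(B_k)\bigr).
\end{equation*}
Multiplying by $6$ and subtracting $2$ gives \eqref{eq:ordinal-phi}.

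I expect no serious obstacle: the key algebraic simplification $a_k(b_k-a_k)-\tfrac12(b_k^2-a_k^2)=-\tfrac12(b_k-a_k)^2$ mirrors exactly what happens in the proofs of Propositions~\ref{prop:ordinal-rho} and \ref{prop:ordinal-tau}, and the only point worth being careful about is that, unlike in \cite{TsScTr}, the intervals $[a_k,b_k]$ need not cover $\II$; this is precisely why I explicitly write the piece $\int_0^1 u\,du-\sum_k\int_{a_k}^{b_k}u\,du$ rather than assuming the complement has measure zero.
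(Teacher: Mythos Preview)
Your proof is correct and essentially identical to the paper's: both compute $\int_0^1\delta_B(u)\,du$ by writing $\delta_B(u)=u$ off the intervals, substituting $t=(u-a_k)/(b_k-a_k)$ on each $[a_k,b_k]$, and using $\int_0^1\delta_{B_k}=(\phi(B_k)+2)/6$. The only cosmetic difference is that the paper groups the integrand as $u+\bigl(\delta_B(u)-u\bigr)$ before integrating, whereas you split the integral directly; the resulting algebra is the same.
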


\begin{proof}
Note that the diagonal section $\delta_B$ of copula $B$ equals
\begin{align*}
\delta_B(u) &= \begin{cases}
		a_k + (b_k-a_k)\delta_{B_k}(\frac{u-a_k}{b_k-a_k}); & u \in [a_k, b_k], k=1, 2, \dots, n,\\
		u; & \text{otherwise} 
    \end{cases} \\
    &= u + \begin{cases}
		a_k - u + (b_k-a_k)\delta_{B_k}(\frac{u-a_k}{b_k-a_k}); & u \in [a_k, b_k], k=1, 2, \dots, n,\\
		0; & \text{otherwise},
    \end{cases}
\end{align*}
so that 
\begin{align*}
\int_0^1 \delta_B(u) \, du &= \int_0^1 u \, du + \sum_{k=1}^n\int_{a_k}^{b_k}
		\big(a_k - u + (b_k-a_k)\delta_{B_k}(\tfrac{u-a_k}{b_k-a_k})\big) \, du\\
    &= \tfrac12 + \sum_{k=1}^n\left(-\tfrac12(b_k-a_k)^2 + (b_k-a_k)^2\int_0^1 \delta_{B_k}(t) \, dt\right) \\
    &= \tfrac12 + \sum_{k=1}^n\left(-\tfrac12(b_k-a_k)^2 + (b_k-a_k)^2 \cdot\frac{\phi(B_k)+2}{6}\right) \\    
    &= \tfrac12 + \sum_{k=1}^n(b_k-a_k)^2\cdot\frac{\phi(B_k)-1}{6} ,
\end{align*}
which implies
$$\phi(B) = 3 + \sum_{k=1}^n(b_k-a_k)^2(\phi(B_k)-1) - 2 = 1 - \sum_{k=1}^n (b_k-a_k)^2(1-\phi(B_k))$$
and finishes the proof.
\end{proof}

Note that in the formulas for Spearman's rho, Kendall's tau, and Spearman's footrule of an ordinal sum the position of the subintervals $(a_k, b_k)$ is not important, only their size $b_k - a_k$ matters. For Gini's gamma this is no longer true. In order to give the result for Gini's gamma, we first prove a simple lemma. 

\begin{lemma} \label{lem:gamma-phi}
For any copula $C \in \CC$ we have 
$$\gamma(C) = \tfrac23 (\phi(C)-\phi(C^{\sigma_2})).$$
\end{lemma}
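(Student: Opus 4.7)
The plan is to expand both sides using the integral definitions of $\phi$ and $\gamma$ given in \eqref{gamma} and \eqref{phi}, and to use the identity $\delta_{C^{\sigma_2}}(u) = u - \omega_C(u)$ which was recorded in Section~\ref{sec:prelim} immediately after the definition of the reflected copulas. This reduces the lemma to a one-line algebraic identity, so no structural argument is needed.

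Concretely, first I would apply \eqref{phi} to the copula $C^{\sigma_2}$ and substitute $\delta_{C^{\sigma_2}}(u) = u - \omega_C(u)$ to obtain
\begin{equation*}
\phi(C^{\sigma_2}) = 6\int_0^1 \delta_{C^{\sigma_2}}(u)\,du - 2 = 6\int_0^1 u\,du - 6\int_0^1 \omega_C(u)\,du - 2 = 1 - 6\int_0^1 \omega_C(u)\,du.
\end{equation*}
Next, subtracting this from $\phi(C) = 6\int_0^1 \delta_C(u)\,du - 2$ yields
\begin{equation*}
\phi(C) - \phi(C^{\sigma_2}) = 6\int_0^1 \delta_C(u)\,du + 6\int_0^1 \omega_C(u)\,du - 3.
\end{equation*}

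Finally I would multiply by $\tfrac{2}{3}$ and compare with the expression for $\gamma(C)$ in \eqref{gamma}: the right-hand side becomes exactly $4\int_0^1 \delta_C(u)\,du + 4\int_0^1 \omega_C(u)\,du - 2 = \gamma(C)$, completing the proof. There is no real obstacle here; the only thing to be careful about is invoking the correct reflection identity $\delta_{C^{\sigma_2}}(u) = u - \omega_C(u)$ rather than its $\sigma_1$ counterpart, so that the opposite diagonal $\omega_C$ (and not something else) appears.
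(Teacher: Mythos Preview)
Your proof is correct and essentially identical to the paper's argument: both hinge on the identity $\delta_{C^{\sigma_2}}(u)=u-\omega_C(u)$ and the integral formulas \eqref{gamma} and \eqref{phi}. The only cosmetic difference is that the paper starts from $\gamma(C)$ and rewrites it in terms of $\phi(C)$ and $\phi(C^{\sigma_2})$, while you compute $\phi(C)-\phi(C^{\sigma_2})$ first and then compare with $\gamma(C)$.
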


\begin{proof}
For any copula $C$ we have
$$\int_0^1 \delta_{C^{\sigma_2}}(u) \, du=\int_0^1 C^{\sigma_2}(u,u) \, du=\frac12-\int_0^1 C(u,1-u) \, du= \frac12-\int_0^1 \omega_C(u) \, du,$$
hence
\begin{align*}
\gamma(C) &=4\int_0^1 \delta_{C}(u) \, du+4\left(\frac12-\int_0^1 \delta_{C^{\sigma_2}}(u) \, du\right)-2 \\
&=4\cdot\frac{\phi(C)+2}{6}-4\cdot\frac{\phi(C^{\sigma_2})+2}{6} \\
&=\tfrac23 (\phi(C)-\phi(C^{\sigma_2})),
\end{align*}    
which finishes the proof.
\end{proof}

\begin{proposition}\label{prop:ordinal-gamma}
Let $B$ be an ordinal sum given in \eqref{eq:ordinal_sum}. If $\tfrac12$ does not belong to any of the subintervals $(a_k, b_k)$, then 
\begin{equation} \label{eq:ordinal-gamma-1}
        \gamma(B) =  1 - \tfrac23\sum_{k=1}^n (b_k-a_k)^2(1-\phi(B_k)). 
\end{equation}  
If $\tfrac12 \in (a_j, b_j)$ for some $j \in \{1, 2, ..., n\}$ then
\begin{align*}
   \gamma(B) &= 4(b_j-a_j)^2\max\{0,c\}^2 + 4a_j - 4a_j^2 - \tfrac23\sum_{k=1}^n (b_k-a_k)^2(1-\phi(B_k)) \\
   &\qquad+ 4(b_j-a_j)^2\int_{\max\{0,c\}}^{1+\min\{0,c\}} B_j\big(t,1+c-t\big) \, dt,
\end{align*}
where $c=\frac{1-a_j-b_j}{b_j-a_j} \in (-1,1)$.
In particular, if $a_j + b_j = 1$ for some $j \in \{1, 2, ..., n\}$ (i.e., $\tfrac12$ lies at the middle of the subinterval $(a_j, b_j)$), then
\begin{equation} \label{eq:ordinal-gamma-3}
        \gamma(B) =  1 - (b_j-a_j)^2(1-\gamma(B_j)) - \tfrac23\sum_{\substack{k=1 \\ k\ne j}}^n (b_k-a_k)^2(1-\phi(B_k)). 
\end{equation}  
\end{proposition}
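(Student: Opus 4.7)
The clean strategy is to invoke Lemma~\ref{lem:gamma-phi}, which reduces the task to computing the opposite-diagonal integral $\int_0^1 \omega_B(u)\,du=\int_0^1 B(u,1-u)\,du$, since $\gamma(B)=\tfrac23(\phi(B)-\phi(B^{\sigma_2}))$, the first summand $\phi(B)$ is already furnished by Proposition~\ref{prop:ordinal-phi}, and $\phi(B^{\sigma_2})=1-6\int_0^1 \omega_B\,du$. The key structural observation is that $(u,1-u)\in[a_k,b_k]^2$ forces $a_k\le\tfrac12\le b_k$; since the subintervals $(a_k,b_k)$ are pairwise disjoint, the opposite diagonal of $B$ meets the interior of \emph{at most one} of the squares $[a_k,b_k]^2$, namely the one whose subinterval contains $\tfrac12$. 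This is exactly the dichotomy in the statement.

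If $\tfrac12\notin(a_k,b_k)$ for every $k$, then $B(u,1-u)=\min\{u,1-u\}$ almost everywhere, so $\int_0^1 \omega_B=\tfrac14$, $\phi(B^{\sigma_2})=-\tfrac12$, and plugging into $\gamma(B)=\tfrac23(\phi(B)+\tfrac12)$ together with Proposition~\ref{prop:ordinal-phi} yields \eqref{eq:ordinal-gamma-1} immediately. When $\tfrac12\in(a_j,b_j)$, the opposite diagonal enters $[a_j,b_j]^2$ precisely on $I_j=[\max\{a_j,1-b_j\},\min\{b_j,1-a_j\}]$. The affine substitution $t=(u-a_j)/(b_j-a_j)$ sends $(1-u-a_j)/(b_j-a_j)$ to $1+c-t$ with $c=(1-a_j-b_j)/(b_j-a_j)$ and maps $I_j$ to $[\max\{0,c\},1+\min\{0,c\}]$. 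Splitting $\int_0^1 \omega_B$ into its contribution from $I_j$ and the complement (where $B(u,1-u)=\min\{u,1-u\}$) and using $M(u,1-u)=a_j+(b_j-a_j)\min\{t,1+c-t\}$ on $I_j$ gives
\[
\int_0^1 \omega_B\,du = \tfrac14 + (b_j-a_j)^2\!\!\int_{\max\{0,c\}}^{1+\min\{0,c\}}\!\!\bigl[B_j(t,1+c-t) - \min\{t,1+c-t\}\bigr]\,dt.
\]
Substituting this, together with $\phi(B)$ from Proposition~\ref{prop:ordinal-phi}, into $\gamma(B)=\tfrac23\phi(B)-\tfrac23 + 4\int_0^1 \omega_B$ produces the general formula modulo the key algebraic identity
\[
(b_j-a_j)^2\Bigl[\max\{0,c\}^2 + \int_{\max\{0,c\}}^{1+\min\{0,c\}}\min\{t,1+c-t\}\,dt\Bigr] = \bigl(\tfrac12-a_j\bigr)^2,
\]
which is verified by evaluating the tent-function integral directly (splitting at its peak $t=(1+c)/2$) in the two subcases $c\ge 0$ and $c\le 0$ and using the relation $(b_j-a_j)(1+c)=1-2a_j$.

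For the symmetric subcase $a_j+b_j=1$ one has $c=0$, so the integration range collapses to $[0,1]$ and $B_j(t,1+c-t)=B_j(t,1-t)=\omega_{B_j}(t)$; re-applying Lemma~\ref{lem:gamma-phi} to $B_j$ (equivalently, using $4\int_0^1 \omega_{B_j}=\gamma(B_j)-\tfrac23\phi(B_j)+\tfrac23$) rewrites $\int_0^1 \omega_{B_j}\,dt$ as a linear combination of $\gamma(B_j)$ and $\phi(B_j)$, and the $\phi(B_j)$-contributions cancel exactly against the $-\tfrac23(b_j-a_j)^2(1-\phi(B_j))$ term inherited from $\phi(B)$, leaving \eqref{eq:ordinal-gamma-3}. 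The main obstacle is the bookkeeping in the general case $c\ne 0$: the subcases $c>0$ and $c<0$ give different closed-form values for the tent-function integral — roughly $(1+2c-3c^2)/4$ and $(1+c)^2/4$ respectively — yet both, after augmentation by $\max\{0,c\}^2$, must collapse to the uniform value $(1+c)^2/4$ required above, so the case split has to be carried out with some care.
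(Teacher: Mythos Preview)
Your proposal is correct and follows essentially the same route as the paper: both arguments invoke Lemma~\ref{lem:gamma-phi} together with Proposition~\ref{prop:ordinal-phi}, reduce everything to the opposite-diagonal integral of $B$ (the paper via $\delta_{B^{\sigma_2}}$, you via $\omega_B$, which differ by the trivial relation $\delta_{B^{\sigma_2}}(u)=u-\omega_B(u)$), and then split into the two subcases $a_j\gtrless 1-b_j$ (equivalently $c\lessgtr 0$). Your presentation is marginally more unified because you package the case split into the single identity $(b_j-a_j)^2\bigl[\max\{0,c\}^2+\int_{\max\{0,c\}}^{1+\min\{0,c\}}\min\{t,1+c-t\}\,dt\bigr]=(\tfrac12-a_j)^2$, but this is a cosmetic difference rather than a genuinely different approach.
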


\begin{proof}
If $\tfrac12 \notin \cup_{k=1}^n(a_k, b_k)$ then $B(\tfrac12,\tfrac12)=\tfrac12$ and 
$\omega_B = \omega_M$. It follows that $\delta_{B^{\sigma_2}} = \delta_{M^{\sigma_2}} = \delta_{W}$ and $\phi(B^{\sigma_2}) = \phi(W) = -\tfrac12$.
By Lemma~\ref{lem:gamma-phi} and Proposition \ref{prop:ordinal-phi} we have
\begin{align*}
   \gamma(B) &= \tfrac23 \phi(B) - \tfrac23 \phi(B^{\sigma_2}) = \tfrac23 - \tfrac23\sum_{k=1}^n (b_k-a_k)^2(1-\phi(B_k)) + \tfrac13 \\
   &= 1 - \tfrac23\sum_{k=1}^n (b_k-a_k)^2(1-\phi(B_k)). 
\end{align*}
Suppose now that $\tfrac12 \in (a_j,b_j)$. We consider two cases. Suppose first that $a_j\ge 1-b_j$.
Then
$$\omega_B(u) = \begin{cases} u; & 0 \le u \le a_j,\\
		a_j + (b_j-a_j)B_j\big(\frac{u-a_j}{b_j-a_j}, \frac{1-u-a_j}{b_j-a_j}\big); & a_j < u \le 1-a_j,\\
		1-u; &  1-a_j < u \le 1,
    \end{cases}$$
and thus
$$\delta_{B^{\sigma_2}}(u) = \begin{cases} 0; & 0 \le u \le a_j,\\
		u - a_j - (b_j-a_j)B_j\big(\frac{u-a_j}{b_j-a_j}, \frac{1-u-a_j}{b_k-a_j}\big); & a_j < u \le 1-a_j,\\
		2u-1; &  1-a_j < u \le 1.
    \end{cases}$$
It follows that 
\begin{align*}
\phi(B^{\sigma_2}) &= 6 \int_0^1 \delta_{B^{\sigma_2}}(u) \, du -2 \\
&= -2 + 6\int_{a_j}^{1-a_j} \big(u - a_j - (b_j-a_j)B_j\big(\tfrac{u-a_j}{b_j-a_j}, \tfrac{1-u-a_j}{b_j-a_j}\big)\big) \, du + 6\int_{1-a_j}^1 (2u-1) \, du \\
&= 6a_j^2-6a_j+1 - 6(b_j-a_j)\int_{a_j}^{1-a_j} B_j\big(\tfrac{u-a_j}{b_j-a_j}, \tfrac{1-u-a_j}{b_j-a_j}\big) \, du \\
&= 6a_j^2-6a_j+1 - 6(b_j-a_j)^2\int_{0}^{\frac{1-2a_j}{b_j-a_j}} B_j\big(t, \tfrac{1-2a_j}{b_j-a_j}-t\big) \, dt. 
\end{align*}
Suppose now that $a_j < 1-b_j$. In this case 
$$\delta_{B^{\sigma_2}}(u) = \begin{cases} 0; & 0 \le u \le 1-b_j,\\
		u - a_j - (b_j-a_j)B_j\big(\frac{u-a_j}{b_j-a_j}, \frac{1-u-a_j}{b_k-a_j}\big); & 1-b_j < u \le b_j,\\
		2u-1; &  b_j < u \le 1,
    \end{cases}$$
and a similar calculation as above gives
\begin{align*}
\phi(B^{\sigma_2}) &= 12b_j-6b_j^2-12a_jb_j+6a_j-5 \\
&\qquad- 6(b_j-a_j)^2\int_{\frac{1-a_j-b_j}{b_j-a_j}}^{1} B_j\big(t, \tfrac{1-2a_j}{b_j-a_j}-t\big) \, dt \\
&=6a_j^2-6a_j+1-6(1-a_j-b_j)^2 - 6(b_j-a_j)^2\int_{\frac{1-a_j-b_j}{b_j-a_j}}^{1} B_j\big(t, \tfrac{1-2a_j}{b_j-a_j}-t\big) \, dt.
\end{align*}
Putting both cases together we obtain
\begin{align*}
    \phi(B^{\sigma_2}) &= 6a_j^2-6a_j+1-6\max\{0,1-a_j-b_j\}^2 - 6(b_j-a_j)^2\int_{\max\big\{0,\frac{1-a_j-b_j}{b_j-a_j}\big\}}^{\min\big\{1,\frac{1-2a_j}{b_j-a_j}\big\}} B_j\big(t, \tfrac{1-2a_j}{b_j-a_j}-t\big) \, dt\\
&=6a_j^2-6a_j+1-6(b_j-a_j)^2\max\{0,c\}^2- 6(b_j-a_j)^2\int_{\max\{0,c\}}^{1+\min\{0,c\}} B_j\big(t,1+c-t\big) \, dt,
\end{align*}
where $c=\frac{1-a_j-b_j}{b_j-a_j} \in (-1,1)$.
By Lemma~\ref{lem:gamma-phi} and Proposition \ref{prop:ordinal-phi} we have
\begin{align*}
   \gamma(B) &= \tfrac23 \phi(B) - \tfrac23 \phi(B^{\sigma_2}) \\
   &= \tfrac23 - \tfrac23\sum_{k=1}^n (b_k-a_k)^2(1-\phi(B_k)) - 4a_j^2 + 4a_j - \tfrac23+4(b_j-a_j)^2\max\{0,c\}^2  \\
   &\qquad+ 4(b_j-a_j)^2\int_{\max\{0,c\}}^{1+\min\{0,c\}} B_j\big(t,1+c-t\big) \, dt \\
   &= 4(b_j-a_j)^2\max\{0,c\}^2 + 4a_j - 4a_j^2 - \tfrac23\sum_{k=1}^n (b_k-a_k)^2(1-\phi(B_k)) \\
   &\qquad+ 4(b_j-a_j)^2\int_{\max\{0,c\}}^{1+\min\{0,c\}} B_j\big(t,1+c-t\big) \, dt. 
\end{align*}
In particular, if $a_j+b_j=1$ this simplifies into
$$\gamma(B) = 4a_j - 4a_j^2 - \tfrac23\sum_{k=1}^n (b_k-a_k)^2(1-\phi(B_k)) + 4(b_j-a_j)^2\int_0^1 B_j(t, 1-t) \, dt. $$
Since 
\begin{align*}
\int_0^1 B_j(t, 1-t) \, dt &= \int_0^1 \omega_{B_j}(t) \, dt = \int_0^1 (t-\delta_{B_j^{\sigma_2}}(t)) \, dt = \tfrac12-\int_0^1 \delta_{B_j^{\sigma_2}}(t) \, dt \\
&= \tfrac12-\tfrac16(\phi(B_j^{\sigma_2})+2) = \tfrac16(1-\phi(B_j^{\sigma_2})),
\end{align*}
we obtain
\begin{align*}
   \gamma(B) &= 4a_j - 4a_j^2 - \tfrac23\sum_{k=1}^n (b_k-a_k)^2(1-\phi(B_k)) + \tfrac23(b_j-a_j)^2(1-\phi(B_j^{\sigma_2})) \\
   &= 4a_j - 4a_j^2 + \tfrac23(b_j-a_j)^2(\phi(B_j)-\phi(B_j^{\sigma_2})) - \tfrac23\sum_{\substack{k=1 \\ k\ne j}}^n (b_k-a_k)^2(1-\phi(B_k))\\
   &= 4a_j - 4a_j^2 + (b_j-a_j)^2\gamma(B_j) - \tfrac23\sum_{\substack{k=1 \\ k\ne j}}^n (b_k-a_k)^2(1-\phi(B_k))\\
   &= 1 - (b_j-a_j)^2(1-\gamma(B_j)) - \tfrac23\sum_{\substack{k=1 \\ k\ne j}}^n (b_k-a_k)^2(1-\phi(B_k)),
\end{align*}
which finishes the proof.
\end{proof}

Next, we consider Blomqvist's beta of an ordinal sum. The proof is straightforward, so we omit it. 

\begin{proposition}\label{prop:ordinal-beta}
Let $B$ be an ordinal sum given in \eqref{eq:ordinal_sum}. If $\tfrac12$ does not belong to any of the subintervals $(a_k, b_k)$, then $\beta(B) = 1$. 
If $\tfrac12 \in (a_j, b_j)$ for some $j \in \{1, 2, ..., n\}$ then
$$ \beta(B) =  4a_j + 4(b_j-a_j)\delta_{B_j}\big(\tfrac{\frac12-a_j}{b_j-a_j}\big) -1. $$  
In particular, if $a_j + b_j = 1$ for some $j \in \{1, 2, ..., n\}$ (i.e., $\tfrac12$ lies in the middle of the subinterval $(a_j, b_j)$), then
$$ \beta(B) =  1 - (b_j-a_j)(1-\beta(B_j)). $$
\end{proposition}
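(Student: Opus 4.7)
The plan is to use the defining identity $\beta(C)=4C(\tfrac12,\tfrac12)-1$ and directly evaluate $B(\tfrac12,\tfrac12)$ by splitting according to whether the point $(\tfrac12,\tfrac12)$ falls inside one of the squares $[a_k,b_k]^2$ or not. Once $B(\tfrac12,\tfrac12)$ is computed, the three stated identities will drop out after at most a single arithmetic simplification, so the work is essentially bookkeeping.

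If $\tfrac12\notin\bigcup_k(a_k,b_k)$, then by \eqref{eq:ordinal_sum} we are in the ``otherwise'' branch of the piecewise definition, yielding $B(\tfrac12,\tfrac12)=\min\{\tfrac12,\tfrac12\}=\tfrac12$, hence $\beta(B)=1$. I will briefly note that even if $\tfrac12$ coincides with one of the endpoints $a_k$ or $b_k$, both branches of the definition give the same value $\tfrac12$ (since $B_k(0,0)=0$ and $B_k(1,1)=1$), so there is no ambiguity at the boundary.

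If instead $\tfrac12\in(a_j,b_j)$ for some (necessarily unique) index $j$, then $(\tfrac12,\tfrac12)\in[a_j,b_j]^2$ and \eqref{eq:ordinal_sum} gives
\[
B(\tfrac12,\tfrac12)=a_j+(b_j-a_j)B_j\!\left(\tfrac{\frac12-a_j}{b_j-a_j},\tfrac{\frac12-a_j}{b_j-a_j}\right)=a_j+(b_j-a_j)\delta_{B_j}\!\left(\tfrac{\frac12-a_j}{b_j-a_j}\right),
\]
so multiplying by $4$ and subtracting $1$ yields the second claimed formula. For the special case $a_j+b_j=1$ I would plug in $\tfrac{\frac12-a_j}{b_j-a_j}=\tfrac12$, use $\delta_{B_j}(\tfrac12)=\tfrac14(\beta(B_j)+1)$, and simplify via $4a_j+(b_j-a_j)-1=1-(b_j-a_j)$, which holds precisely because $a_j+b_j=1$.

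There is no real obstacle here: unlike the Gini's gamma case, Blomqvist's beta depends on $B$ only through a single point, so the calculation is local. The only care needed is handling the boundary ambiguity described above and verifying the final arithmetic in the symmetric subcase. For this reason, writing out the full proof is unnecessary and I would follow the authors in omitting the routine verification.
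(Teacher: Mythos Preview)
Your proposal is correct and matches the paper's intent: the authors omit the proof as straightforward, and the direct pointwise evaluation of $B(\tfrac12,\tfrac12)$ from \eqref{eq:ordinal_sum} that you describe is exactly the routine verification they have in mind. Your boundary check and the arithmetic in the symmetric subcase are both sound.
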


Finally, we consider Chatterjee's xi of an ordinal sum of copulas. 

\begin{proposition}\label{prop:ordinal-xi}
   Chatterjee's xi of an ordinal sum $B$, given in \eqref{eq:ordinal_sum}, is
    \begin{equation} \label{eq:ordinal-xi}
        \xi(B) =  1 - \sum_{k=1}^n (b_k-a_k)^2(1-\xi(B_k)). 
    \end{equation}  
\end{proposition}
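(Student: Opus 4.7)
The plan is to work directly from the integral formula \eqref{eq:xi} and decompose the integration domain into the squares $[a_k,b_k]^2$ and their complement. On each square $[a_k,b_k]^2$ the chain rule gives
$$\partial_1 B(u,v) = \partial_1 B_k\Bigl(\tfrac{u-a_k}{b_k-a_k},\tfrac{v-a_k}{b_k-a_k}\Bigr),$$
so the substitution $t=\frac{u-a_k}{b_k-a_k}$, $s=\frac{v-a_k}{b_k-a_k}$ yields
$$\int_{[a_k,b_k]^2}(\partial_1 B(u,v))^2\,du\,dv = (b_k-a_k)^2\int_{\II^2}(\partial_1 B_k(t,s))^2\,dt\,ds = (b_k-a_k)^2\cdot\tfrac{\xi(B_k)+2}{6}.$$

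Off the union of squares, $B(u,v)=\min\{u,v\}$, so $\partial_1 B(u,v)=1$ on the a.e.\ open set where $u<v$ and $\partial_1 B(u,v)=0$ where $u>v$ (the diagonal has measure zero). Hence $(\partial_1 B)^2$ is simply the indicator of $\{u<v\}$ on this complement. Since the open squares $(a_k,b_k)^2$ are disjoint and the subset of each square with $u<v$ has area $\tfrac12(b_k-a_k)^2$, the area of the complement intersected with $\{u<v\}$ equals $\tfrac12 - \tfrac12\sum_{k=1}^n(b_k-a_k)^2$, giving
$$\int_{\II^2\setminus\bigcup_k[a_k,b_k]^2}(\partial_1 B(u,v))^2\,du\,dv = \tfrac12 - \tfrac12\sum_{k=1}^n(b_k-a_k)^2.$$

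Plugging the two contributions into \eqref{eq:xi} and simplifying,
\begin{align*}
\xi(B) &= 6\sum_{k=1}^n(b_k-a_k)^2\cdot\tfrac{\xi(B_k)+2}{6} + 3 - 3\sum_{k=1}^n(b_k-a_k)^2 - 2\\
&= 1 + \sum_{k=1}^n(b_k-a_k)^2(\xi(B_k)-1) = 1 - \sum_{k=1}^n(b_k-a_k)^2(1-\xi(B_k)),
\end{align*}
as claimed. The only mild subtlety is the treatment of the complement: one should note that the boundaries of the squares and the diagonal $\{u=v\}$ have two-dimensional Lebesgue measure zero, so the a.e.\ values of $\partial_1 M$ used above suffice; no Chatterjee-style regularity of $\partial_1 B_k$ is needed beyond its existence a.e., which is guaranteed since $B_k$ is a copula.
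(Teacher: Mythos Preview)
Your proof is correct and follows essentially the same approach as the paper: both decompose $\II^2$ into the squares $[a_k,b_k]^2$ and their complement, compute $\partial_1 B$ piecewise (via the chain rule on the squares and from $B=M$ on the complement), and combine the contributions using the substitution $t=\tfrac{u-a_k}{b_k-a_k}$, $s=\tfrac{v-a_k}{b_k-a_k}$. Your added remark on why the boundaries and the diagonal can be ignored is a welcome clarification but does not alter the argument.
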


\begin{proof}
Note that the partial derivative $\partial_1B$ of copula $B$ with respect to the first variable equals
\begin{align*}
\partial_1B(u,v) &= \begin{cases}
		\partial_1B_k(\frac{u-a_k}{b_k-a_k},\frac{v-a_k}{b_k-a_k}); & (u,v) \in [a_k, b_k]^2, k=1, 2, \dots, n,\\
		1; & v \ge u, (u,v) \notin \cup_{k=1}^n[a_k, b_k]^2, \\
		0; & \text{otherwise},
    \end{cases} 
\end{align*}
so that 
\begin{align*}
\int_{\II^2} (\partial_1B(u,v))^2 \, du\, dv &= \sum_{k=1}^n  \int_{[a_k, b_k]^2} (\partial_1B_k(\tfrac{u-a_k}{b_k-a_k},\tfrac{v-a_k}{b_k-a_k}))^2 \, du\, dv + \int_{\{(u,v) \in \II^2 \mid v \ge u\} \setminus \cup_{k=1}^n[a_k, b_k]^2} du\, dv \\
&= \sum_{k=1}^n  \int_{[a_k, b_k]^2} (\partial_1B_k(\tfrac{u-a_k}{b_k-a_k},\tfrac{v-a_k}{b_k-a_k}))^2 \, du\, dv + \tfrac12 - \sum_{k=1}^n \tfrac12(b_k - a_k)^2 .
\end{align*}
Since 
\begin{align*}
\int_{[a_k, b_k]^2} (\partial_1B_k(\tfrac{u-a_k}{b_k-a_k},\tfrac{v-a_k}{b_k-a_k}))^2 \, du\, dv &= (b_k - a_k)^2 \int_{\II^2} (\partial_1B_k(s,t))^2 \, ds\, dt \\
&= (b_k - a_k)^2 \cdot\frac{\xi(B_k)+2}{6},
\end{align*}
we obtain
\begin{align*}
\xi(B) &= 6\int_{\II^2} (\partial_1B(u,v))^2 \, du\, dv -2 \\
&= \sum_{k=1}^n (b_k - a_k)^2(\xi(B_k)+2) + 3 - \sum_{k=1}^n 3(b_k - a_k)^2 - 2 \\
&= 1 - \sum_{k=1}^n (b_k-a_k)^2(1-\xi(B_k)),
\end{align*}
which finishes the proof.
\end{proof}

We finish this section by looking at a special simple ordinal sum, containing just one copula in the middle of the unit square, which will be needed later.

\begin{lemma}\label{lem:ordinal}
For any $a \in [0,\tfrac12]$ and copula $C\in\CC$ let 
$B_{a,C}$ be an ordinal sum of $\{C\}$ with respect to the interval $\{(a, 1-a)\}$. Then 
\begin{align*}
   \rho(B_{a,C}) &=  (1-2a)^3\rho(C) + 8a^3 - 12a^2 + 6a, \\
   \tau(B_{a,C}) &=  (1-2a)^2\tau(C) + 4a - 4a^2, \\
   \phi(B_{a,C}) &= (1-2a)^2\phi(C) + 4a - 4a^2, \\
   \gamma(B_{a,C}) &=  (1-2a)^2\gamma(C) + 4a - 4a^2, \\
   \beta(B_{a,C}) &=  (1-2a)\beta(C) + 2a, \qquad  \text{ and} \\
   \xi(B_{a,C}) &= (1-2a)^2\xi(C) + 4a - 4a^2.
\end{align*}
\end{lemma}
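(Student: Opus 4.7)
The proof is essentially a direct specialization of Propositions \ref{prop:ordinal-rho}--\ref{prop:ordinal-xi} to the case $n=1$, $a_1 = a$, $b_1 = 1-a$, $B_1 = C$, for which $b_1 - a_1 = 1-2a$. Throughout I will use the algebraic identities $1-(1-2a)^2 = 4a-4a^2$ and $1-(1-2a)^3 = 6a - 12a^2 + 8a^3$, which are the only non-trivial simplifications required.

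The plan is as follows. First, for Spearman's rho I substitute into \eqref{eq:ordinal-rho} to get $\rho(B_{a,C}) = 1 - (1-2a)^3(1-\rho(C)) = (1-2a)^3 \rho(C) + 8a^3 - 12a^2 + 6a$. Similarly, for Kendall's tau, Spearman's footrule, and Chatterjee's xi I plug into \eqref{eq:ordinal-tau}, \eqref{eq:ordinal-phi}, and \eqref{eq:ordinal-xi} respectively; each gives $(1-2a)^2 \kappa(C) + 4a - 4a^2$, where $\kappa$ is $\tau$, $\phi$, or $\xi$.

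For Gini's gamma and Blomqvist's beta, the key observation is that $a_1 + b_1 = a + (1-a) = 1$, so $\tfrac12$ lies at the midpoint of the subinterval $(a, 1-a)$ (assuming $a < \tfrac12$). Hence the simplified "middle" cases of Propositions \ref{prop:ordinal-gamma} and \ref{prop:ordinal-beta} apply with $j=1$ and empty auxiliary sum. From \eqref{eq:ordinal-gamma-3} I obtain $\gamma(B_{a,C}) = 1 - (1-2a)^2(1-\gamma(C)) = (1-2a)^2 \gamma(C) + 4a - 4a^2$, and from the corresponding formula for $\beta$ I obtain $\beta(B_{a,C}) = 1 - (1-2a)(1-\beta(C)) = (1-2a)\beta(C) + 2a$.

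The only subtlety is the boundary case $a = \tfrac12$, in which the subinterval $(a,1-a)$ is empty and $B_{a,C} = M$. One checks by direct substitution that each of the six formulas returns $1$ at $a=\tfrac12$, in agreement with $\rho(M) = \tau(M) = \phi(M) = \gamma(M) = \beta(M) = \xi(M) = 1$. Since no genuine computational obstacle arises, the main task is merely to verify that the middle-case hypotheses for $\gamma$ and $\beta$ are satisfied, which is immediate from $a + (1-a) = 1$.
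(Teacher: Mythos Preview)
Your proposal is correct and follows essentially the same approach as the paper's proof: specialize Propositions \ref{prop:ordinal-rho}--\ref{prop:ordinal-xi} to the single interval $(a,1-a)$, use $a_1+b_1=1$ to invoke the ``middle'' cases \eqref{eq:ordinal-gamma-3} and the corresponding formula for $\beta$, and treat $a=\tfrac12$ separately as $B_{a,C}=M$. The only cosmetic difference is that the paper dispatches the boundary case $a=\tfrac12$ first rather than last.
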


\begin{proof}
If $a=\tfrac12$ then $B_{a,C}=M$ and the formulas hold. So, we may assume without loss of generality that $a<\tfrac12$.
By Propositions \ref{prop:ordinal-rho},  \ref{prop:ordinal-tau},  \ref{prop:ordinal-phi},  \ref{prop:ordinal-gamma},  \ref{prop:ordinal-beta}, \ref{prop:ordinal-xi}, respectively,   we have
\begin{align*}
\rho(B_{a,C}) &= 1-(1-2a)^3(1-\rho(C)) = (1-2a)^3\rho(C) + 8a^3 - 12a^2 + 6a, \\
\tau(B_{a,C}) &= 1-(1-2a)^2(1-\tau(C)) = (1-2a)^2\tau(C) + 4a - 4a^2, \\
\phi(B_{a,C}) &= 1-(1-2a)^2(1-\phi(C)) = (1-2a)^2\phi(C) + 4a - 4a^2, \\
\gamma(B_{a,C}) &= 1-(1-2a)^2(1-\gamma(C)) = (1-2a)^2\gamma(C) + 4a - 4a^2, \\
\beta(B_{a,C}) &= 1-(1-2a)(1-\beta(C)) = (1-2a)\beta(C) + 2a, \qquad \text{and} \\
\xi(B_{a,C}) &= 1-(1-2a)^2(1-\xi(C)) = (1-2a)^2\xi(C) + 4a - 4a^2, 
\end{align*}
respectively, which finishes the proof.
\end{proof}

\section{Exact region phi-gamma-tau}\label{sec:region}

In this section we find the exact region determined by Spearman's footrule, Gini's gamma and Kendall's tau
$$\Omega_{\phi, \gamma, \tau} = \big\{( \phi(C), \gamma(C), \tau(C)) \in [-\tfrac12, 1] \times [-1, 1] \times [-1, 1] \mid C \in \CC\big\}.$$
The first lemma gives the bounds for Kendall's tau of a copula, if its Spearman's footrule and Gini's gamma are known.

\begin{lemma}\label{lem:bounds}
For any copula $C\in\CC$ we have
$$ \max\big\{\tfrac43\phi(C)-\tfrac13,\ -\tfrac23\phi(C)+\gamma(C)-\tfrac13 \big\} \le \tau(C) \le \min\big\{\tfrac23\phi(C)+\tfrac13,\ -\tfrac43\phi(C)+2\gamma(C)+\tfrac13 \big\}.$$
\end{lemma}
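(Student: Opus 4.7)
My plan is to observe that the four inequalities encoded by the stated bounds split into two already-known ones and two new ones. Specifically, the pair $\tfrac43\phi(C)-\tfrac13 \le \tau(C) \le \tfrac23\phi(C)+\tfrac13$ is exactly equation \eqref{eq:tau-phi} of Proposition~\ref{pro:old-results} and may simply be quoted. The remaining pair
\[-\tfrac23\phi(C)+\gamma(C)-\tfrac13 \le \tau(C) \le -\tfrac43\phi(C)+2\gamma(C)+\tfrac13\]
is what really requires proof.

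To establish these, I will apply \eqref{eq:tau-phi} not to $C$ itself but to the reflected copula $C^{\sigma_2}$, which is again a copula. Two transport identities make this strategy work. First, since $\tau$ is a concordance measure (as recalled in Section~\ref{sec:prelim}), $\tau(C^{\sigma_2})=-\tau(C)$. Second, Lemma~\ref{lem:gamma-phi} rearranges to $\phi(C^{\sigma_2})=\phi(C)-\tfrac32\gamma(C)$. Substituting these two expressions into the $(\phi,\tau)$-bounds for $C^{\sigma_2}$ and then negating the resulting double inequality, which swaps the upper and lower bounds, yields precisely the two missing inequalities on $\tau(C)$ in terms of $\phi(C)$ and $\gamma(C)$. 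Combining all four inequalities then gives the stated $\max$--$\min$ form.

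There is no substantive obstacle here: the argument is routine algebra once one sees that the reflection $C\mapsto C^{\sigma_2}$ is the right bridge, because Lemma~\ref{lem:gamma-phi} is tailor-made to convert $\phi(C^{\sigma_2})$ into a linear combination of $\phi(C)$ and $\gamma(C)$. The only care needed is with signs, to ensure that after multiplication by $-1$ the lower bound for $\tau(C^{\sigma_2})$ correctly becomes the upper bound for $\tau(C)$ and vice versa.
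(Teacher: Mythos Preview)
Your proposal is correct and follows essentially the same approach as the paper: quote \eqref{eq:tau-phi} for two of the inequalities, then apply \eqref{eq:tau-phi} to $C^{\sigma_2}$ and use $\tau(C^{\sigma_2})=-\tau(C)$ together with Lemma~\ref{lem:gamma-phi} (rearranged as $\phi(C^{\sigma_2})=\phi(C)-\tfrac32\gamma(C)$) to obtain the other two. The paper's proof is identical in structure and detail.
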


\begin{proof}
By \eqref{eq:tau-phi} we need to prove only
$$ -\tfrac23\phi(C)+\gamma(C)-\tfrac13 \le \tau(C) \le -\tfrac43\phi(C)+2\gamma(C)+\tfrac13.$$
For any copula $C\in \CC$ it follows from Lemma~\ref{lem:gamma-phi} that
\begin{equation}
    \phi(C^{\sigma_2}) = \phi(C) - \tfrac32 \gamma(C).
\end{equation}
Using \eqref{eq:tau-phi} again, we infer
$$\tau(C) = - \tau(C^{\sigma_2}) \le -(\tfrac43\phi(C^{\sigma_2})-\tfrac13) = -\tfrac43(\phi(C) - \tfrac32 \gamma(C)) + \tfrac13  = -\tfrac43\phi(C)+2\gamma(C)+\tfrac13,$$
and
$$\tau(C) = - \tau(C^{\sigma_2}) \ge -(\tfrac23\phi(C^{\sigma_2})+\tfrac13) = -\tfrac23(\phi(C) - \tfrac32 \gamma(C)) - \tfrac13  = -\tfrac23\phi(C)+\gamma(C)-\tfrac13,$$
which finishes the proof.
\end{proof}

We define a polyhedron $\Omega$ in $\RR^3$ by
\begin{align} 
   \Omega = \{(\phi, \gamma, \tau) \in \RR^3 \mid 
   -2\phi &\le 1, \nonumber\\
    - 8\phi + 6\gamma &\leq 1, \nonumber\\
   - 2\phi + 3\gamma &\leq 1, \nonumber\\
   4\phi-3\tau &\le 1, \label{eq:Omega}\\
   - 2\phi + 3\tau  &\le 1, \nonumber\\
   -2\phi+3\gamma-3\tau &\le 1, \nonumber\\ 
   4\phi-6\gamma + 3\tau &\le 1\}.\nonumber
\end{align}
If follows from the fact that $\phi(C) \ge -\tfrac12$, equation \eqref{eq:gamma-phi} and Lemma \ref{lem:bounds} that for any copula $C \in \CC$ the point $(\phi(C), \gamma(C), \tau(C))$ lies in the polyhedron $\Omega$. Note that the polyhedron $\Omega$ has 6 vertices
$$P_1(-\tfrac12,-1,-1),\ P_2(-\tfrac12,-\tfrac12,-\tfrac12),\ P_3(-\tfrac12,-\tfrac12,0),\ P_4(1,1,1),\ P_5(\tfrac14,\tfrac12,0),\ P_6(\tfrac14,\tfrac12,\tfrac12), $$
7 faces
\begin{align*}
  F_1 &:=  P_1 P_2 P_3 = \{ (\phi,\gamma, \tau) \in \Omega \mid -2\phi = 1 \}, \\
  F_2 &:=  P_1 P_2 P_5 = \{ (\phi,\gamma, \tau) \in \Omega \mid -2\phi+3\gamma-3\tau = 1 \}, \\
  F_3 &:=  P_1 P_3 P_4 = \{ (\phi,\gamma, \tau) \in \Omega \mid 4\phi-6\gamma + 3\tau = 1  \}, \\
  F_4 &:=  P_1 P_4 P_5 = \{ (\phi,\gamma, \tau) \in \Omega \mid 4\phi-3\tau = 1  \}, \\
  F_5 &:=  P_2 P_3 P_5 P_6 = \{ (\phi,\gamma, \tau) \in \Omega \mid - 8\phi + 6\gamma = 1  \}, \\
  F_6 &:=  P_3 P_4 P_6 = \{ (\phi,\gamma, \tau) \in \Omega \mid - 2\phi + 3\tau = 1  \}, \\
  F_7 &:=  P_4 P_5 P_6 = \{ (\phi,\gamma, \tau) \in \Omega \mid - 2\phi + 3\gamma = 1  \},
\end{align*}
and 11 edges
\begin{align*}
    P_1 P_2 &= \{ (\phi,\gamma, \tau) \in \Omega \mid -2\phi =1,\ -2\phi+3\gamma-3\tau = 1 \}, \\
    P_1 P_3 &= \{ (\phi,\gamma, \tau) \in \Omega \mid -2\phi = 1, \  4\phi-6\gamma + 3\tau=1  \}, \\
    P_1 P_4 &= \{ (\phi,\gamma, \tau) \in \Omega \mid 4\phi-3\tau = 1, \ 4\phi-6\gamma + 3\tau=1  \}, \\
    P_1 P_5 &= \{ (\phi,\gamma, \tau) \in \Omega \mid 4\phi-3\tau = 1, \ -2\phi+3\gamma-3\tau = 1 \}, \\
    P_2 P_3 &= \{ (\phi,\gamma, \tau) \in \Omega \mid \phi = -\tfrac12, \  \gamma= -\tfrac12  \}, \\
    P_2 P_5 &= \{ (\phi,\gamma, \tau) \in \Omega \mid -8\phi + 6\gamma = 1, \ -2\phi+3\gamma-3\tau = 1  \}, \\
    P_3 P_4 &= \{ (\phi,\gamma, \tau) \in \Omega \mid - 2\phi + 3\tau  = 1, \ 4\phi-6\gamma + 3\tau = 1  \}, \\
    P_3 P_6 &= \{ (\phi,\gamma, \tau) \in \Omega \mid - 8\phi + 6\gamma = 1, \ - 2\phi + 3\tau = 1  \}, \\
    P_4 P_5 &= \{ (\phi,\gamma, \tau) \in \Omega \mid - 2\phi + 3\gamma = 1, \ 4\phi-3\tau = 1  \}, \\
    P_4 P_6 &= \{ (\phi,\gamma, \tau) \in \Omega \mid - 2\phi + 3\gamma = 1, \ - 2\phi + 3\tau = 1  \}, \\
    P_5 P_6 &= \{ (\phi,\gamma, \tau) \in \Omega \mid \phi =\tfrac14, \ \gamma =\tfrac12  \}.
\end{align*}
The polyhedron $\Omega$ is depicted in Figure~\ref{fig:polyhedron} from two distinct points of view.
\begin{figure}
    \centering
    \includegraphics[width=0.48\linewidth]{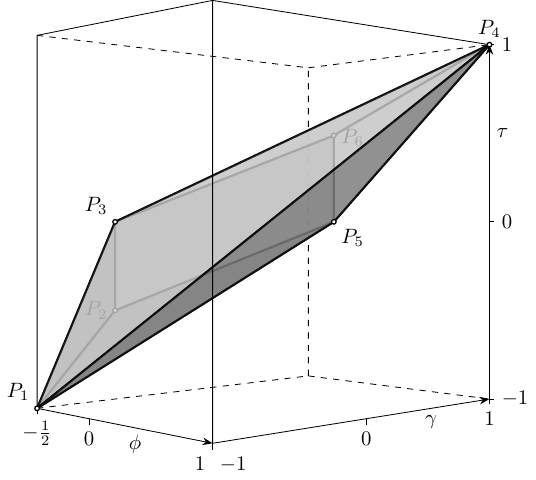}
    \includegraphics[width=0.465\linewidth]{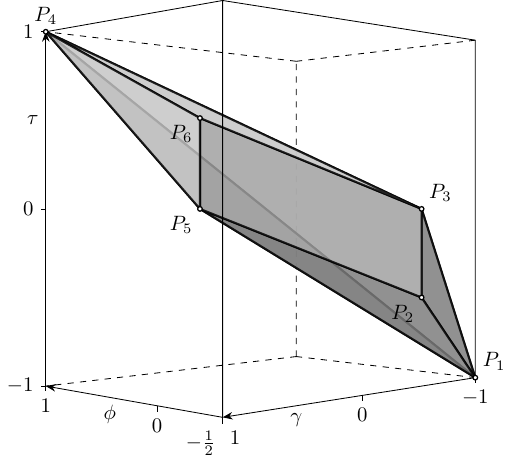}
    \caption{The polyhedron $\Omega$ defined in \eqref{eq:Omega}. Theorem \ref{thm:main} shows that $\Omega$ is precisely the region determined by $\phi$, $\gamma$ and $\tau$.}
    \label{fig:polyhedron}
\end{figure}

Note also that the projection of the polyhedron $\Omega$ to the $\phi\gamma$-plane is the quadrilateral 
$P'_1(-\tfrac12,-1)P'_4(1,1)P'_5(\tfrac14,\tfrac12)P'_2(-\tfrac12,-\tfrac12)$, the exact region determined by Spearman's footrule and Gini's gamma (see \cite{KoBuMo}), 
the projection of the polyhedron $\Omega$ to the $\phi\tau$-plane is the triangle
$P''_1(-\tfrac12,-1)P''_4(1,1)P''_3(-\tfrac12,0)$, the exact region determined by Spearman's footrule and Kendall's tau (see \cite{KoBuSt}), 
and the projection of the polyhedron $\Omega$ to the $\gamma\tau$-plane is the quadrilateral 
$P'''_1(-1,-1)P'''_3(-\tfrac12,0)P'''_4(1,1)P'''_5(\tfrac12,0)$, the exact region determined by Gini's gamma and Kendall's tau (see \cite{KoBuSt}).

Since $\gamma$ and $\tau$ are concordance measures, we have $$\gamma(C^{\sigma_2})=-\gamma(C) \qquad \text{and} \qquad \tau(C^{\sigma_2})=-\tau(C).$$
Furthermore, Lemma~\ref{lem:gamma-phi} implies $\phi(C^{\sigma_2})=\phi(C)-\tfrac32 \gamma(C)$. Thus, the reflection mapping $C \mapsto C^{\sigma_2}$ induces a linear involution
$$\mathcal{A}\colon (\phi, \gamma, \tau) \mapsto (\phi - \tfrac32 \gamma, -\gamma, -\tau).$$
In particular, for any copula $C\in \CC$ we have
\begin{equation}\label{eq:mappingA}
  \mathcal{A}(\phi(C), \gamma(C), \tau(C)) = (\phi(C^{\sigma_2}), \gamma(C^{\sigma_2}), \tau(C^{\sigma_2})).  
\end{equation}

The following lemma is easy to check.
\begin{lemma}\label{lem:linear_map_A}
Linear mapping $\mathcal{A}$ has the following properties
\begin{align*}
\mathcal{A}(P_1)=P_4, && \mathcal{A}(F_1)=F_7,\\
\mathcal{A}(P_2)=P_6, && \mathcal{A}(F_2)=F_6,\\
\mathcal{A}(P_3)=P_5, && \mathcal{A}(F_3)=F_4,\\
\mathcal{A}(P_4)=P_1, && \mathcal{A}(F_4)=F_3,\\
\mathcal{A}(P_5)=P_3, && \mathcal{A}(F_5)=F_5,\\
\mathcal{A}(P_6)=P_2, && \mathcal{A}(F_6)=F_2, \\
&& \mathcal{A}(F_7)=F_1.
\end{align*}
Moreover, mapping $\mathcal{A}$ preserves polyhedron $\Omega$.
\end{lemma}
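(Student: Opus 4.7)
The mapping $\mathcal{A}(\phi,\gamma,\tau)=(\phi-\tfrac32\gamma,-\gamma,-\tau)$ is affine (in fact linear), so my plan is to reduce every assertion to arithmetic on the vertex list. First I would verify the six vertex identities by direct substitution. For instance, $\mathcal{A}(P_1)=\mathcal{A}(-\tfrac12,-1,-1)=(-\tfrac12+\tfrac32,1,1)=(1,1,1)=P_4$, and $\mathcal{A}(P_2)=(-\tfrac12+\tfrac34,\tfrac12,\tfrac12)=(\tfrac14,\tfrac12,\tfrac12)=P_6$; the remaining four are handled the same way. These calculations also make clear that $\mathcal{A}$ is an involution, since $\mathcal{A}^2(\phi,\gamma,\tau)=\mathcal{A}(\phi-\tfrac32\gamma,-\gamma,-\tau)=(\phi-\tfrac32\gamma+\tfrac32\gamma,\gamma,\tau)=(\phi,\gamma,\tau)$.

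With the vertex correspondence established, the seven face identities follow by bookkeeping: each face $F_i$ is the convex hull of a distinguished subset of $\{P_1,\ldots,P_6\}$ as listed in the paper, and an affine map sends the convex hull of a set to the convex hull of its image. For example, $F_1=\mathrm{conv}\{P_1,P_2,P_3\}$ maps to $\mathrm{conv}\{P_4,P_6,P_5\}=F_7$, while $F_5=\mathrm{conv}\{P_2,P_3,P_5,P_6\}$ maps to $\mathrm{conv}\{P_6,P_5,P_3,P_2\}=F_5$, and the other five cases are identical in spirit.

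For the last claim, that $\mathcal{A}$ preserves $\Omega$, I would argue that $\Omega$ is a bounded convex polyhedron (it is contained in $[-\tfrac12,1]\times[-1,1]\times[-1,1]$), hence equal to the convex hull of its vertex set $\{P_1,\ldots,P_6\}$. Since $\mathcal{A}$ is affine and, by the first step, induces a bijection of this vertex set onto itself, we conclude $\mathcal{A}(\Omega)=\mathcal{A}(\mathrm{conv}\{P_1,\ldots,P_6\})=\mathrm{conv}\{P_1,\ldots,P_6\}=\Omega$.

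There is no genuine obstacle in this lemma; the only care required is in performing the six vertex substitutions cleanly and in matching the resulting vertex triples/quadruples to the correct faces from the enumeration preceding the lemma. Verifying $\mathcal{A}^2=\mathrm{id}$ at the outset is a convenient consistency check that guarantees the induced permutations of vertices and of faces are themselves involutions, which is visibly the case in the displayed list.
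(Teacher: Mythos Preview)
Your proposal is correct and, in fact, more detailed than the paper's treatment: the paper simply states that the lemma is ``easy to check'' and omits the proof entirely, so your explicit plan of verifying the six vertex images by substitution, deducing the face correspondence from the affine image of convex hulls, and concluding $\mathcal{A}(\Omega)=\Omega$ via $\Omega=\mathrm{conv}\{P_1,\ldots,P_6\}$ is exactly the intended direct verification spelled out in full.
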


In the following lemmas we show that points on the faces $F_2$, $F_3$, $F_4$, and $F_6$ are attained by some copulas.

\begin{lemma}\label{lem:F6}
For any point $T$ on the face $F_6$ of the polyhedron $\Omega$
$$F_6 = P_3P_4P_6 = \big\{(\phi, \gamma, \tau) \in \RR^3 \mid -\tfrac12 \le \phi \le 1,\ \phi \le \gamma \le \min\{\tfrac43 \phi+\tfrac16, \tfrac23 \phi+\tfrac13\},\ \tau = \tfrac23\phi+\tfrac13\big\}$$
there exists a copula $C\in\CC$ such that $(\phi(C), \gamma(C), \tau(C)) = T$.
\end{lemma}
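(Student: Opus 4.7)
The face $F_6$ is the triangle with vertices $P_3, P_4, P_6$ lying in the plane $\tau = \tfrac23\phi+\tfrac13$. My approach is to produce, for each target $T\in F_6$, a specific copula as an ordinal sum of shuffles of $M$, and then verify the triple $(\phi, \gamma, \tau)$ equals $T$ via the formulas in Section~\ref{sec:ordinal}.

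First I would identify one-parameter families of shuffles covering the three vertices and two of the three edges of $F_6$. The family $C_a := M(2,(1-a),(2,1),(1,1))$ satisfies $\phi(C_a) = \gamma(C_a) = 1 - 6a(1-a)$ and $\tau(C_a) = (1-2a)^2$, sweeping the edge $P_3 P_4$ for $a\in[0,\tfrac12]$; in particular $C_{1/2}$ attains the vertex $P_3$. A more elaborate shuffle $K_a := M(4,(a,\tfrac12,1-a),(2,1,4,3),(1,1,1,1))$ satisfies $\phi(K_a) = 1-6a(1-2a)$ and $\gamma(K_a) = \tau(K_a) = 1-4a(1-2a)$, sweeping the edge $P_4 P_6$ for $a\in[0,\tfrac14]$; in particular $K_{1/4}$ attains the vertex $P_6$. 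Both families are easily seen to satisfy $\tau = \tfrac23\phi+\tfrac13$ and so lie in $F_6$. For the interior and the remaining edge $P_3P_6$, I would consider three-summand ordinal sums built from these blocks, such as $B = $ the ordinal sum of $\{C_b, C_a, C_b\}$ on subintervals $\{(0,s),(s,1-s),(1-s,1)\}$, parametrised by $(a,b,s) \in [0,\tfrac12]^3$. Propositions~\ref{prop:ordinal-phi}, \ref{prop:ordinal-tau}, \ref{prop:ordinal-gamma} give $\phi(B), \gamma(B), \tau(B)$ as polynomial functions of $(a,b,s)$, and the relation $\tau(B) = \tfrac23\phi(B)+\tfrac13$ holds automatically since every summand lies on this plane. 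One computes $\gamma(B) - \phi(B) = 4b(1-b)s^2$ and $\tau(B) - \gamma(B) = 2a(1-a)(1-2s)^2$, which together with $\phi(B)$ give two independent coordinates on $F_6$; solving this system in $(a,b,s)$ for a given target $T \in F_6$ is then the desired construction.

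The main obstacle is verifying that the parameter box is large enough to reach every point. A direct analysis shows that the image of $(a,b,s)\mapsto(\phi(B),\gamma(B))$ is cut out by the curve $2\sqrt{\gamma_0-\phi_0} + \sqrt{2(\tfrac23\phi_0+\tfrac13-\gamma_0)} \le 1$, which is tangent to $F_6$ at the three vertices but curves strictly inside in between; in particular the edge $P_3P_6$ is \emph{not} covered by this single family. To close the gap I would split $F_6$ along the line $\phi = \tfrac14$ (where the two upper-bound expressions in Proposition~\ref{pro:old-results} coincide) and treat the two sub-triangles separately: on the right sub-triangle ($\phi_0 \ge \tfrac14$, containing $P_6$) the $\{C_b, C_a, C_b\}$ family above already suffices, while on the left sub-triangle ($\phi_0 \le \tfrac14$) I would replace one outer summand $C_b$ by a shuffle of $M$ that attains the upper Gini-footrule bound $\gamma = \tfrac43\phi + \tfrac16$ (such shuffles exist by \cite{KoBuMo}), thereby shifting the reachable region to include the edge $P_3P_6$. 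In both cases the parameters can then be solved explicitly, completing the lemma.
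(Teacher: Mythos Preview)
Your overall strategy (build ordinal sums from shuffle-of-$M$ blocks and use the formulas of Section~\ref{sec:ordinal} to land on the plane $\tau=\tfrac23\phi+\tfrac13$) is sound, and your computations $\gamma(B)-\phi(B)=4s^2 b(1-b)$ and $\tau(B)-\gamma(B)=2(1-2s)^2 a(1-a)$ are correct. The problem is the coverage claim. Writing $p=\gamma-\phi$ and $q=\tau-\gamma$, the face $F_6$ becomes the triangle $\{p\ge 0,\ q\ge 0,\ 2p+q\le\tfrac12\}$, while the image of your three-parameter family is exactly $\{p\ge 0,\ q\ge 0,\ 2\sqrt{p}+\sqrt{2q}\le 1\}$, as you state. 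But your assertion that this image contains the ``right sub-triangle'' $\{\phi\ge\tfrac14\}=\{p+q\le\tfrac14\}$ is false: the point $(p,q)=(\tfrac18,\tfrac18)$, which corresponds to $(\phi,\gamma,\tau)=(\tfrac14,\tfrac38,\tfrac12)\in F_6$, satisfies $2\sqrt{1/8}+\sqrt{1/4}=\tfrac{1}{\sqrt2}+\tfrac12>1$ and is therefore not reached. In fact, along the segment $p+q=\tfrac14$ your family misses every point with $0<q<\tfrac29$, so the gap is substantial. The proposed patch for the left sub-triangle is only a sketch, and since the right sub-triangle already fails, the split at $\phi=\tfrac14$ does not rescue the argument.

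The underlying obstacle is that your basic block $C_a$ (the cyclic shift) lies on the edge $P_3P_4$ where $\phi=\gamma$, and ordinal-summing copies of it can approach the opposite edge $P_3P_6$ only at its two endpoints. The paper avoids this by starting from a different building block: a six-piece shuffle $C_b$ with $\phi(C_b)=12b^2-\tfrac12$, $\gamma(C_b)=16b^2-\tfrac12$, $\tau(C_b)=8b^2$, which for $b\in[0,\tfrac14]$ already traces the \emph{entire} edge $P_3P_6$ (equivalently $2p+q=\tfrac12$). A single centred ordinal sum $A_{a,b}=B_{a,C_b}$ then scales $(p,q)$ by the factor $(1-2a)^2$, and these rescaled segments sweep out all of $F_6$ with just two parameters. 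To repair your approach you would need a block family that reaches the edge $P_3P_6$ itself, not merely its vertices.
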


\begin{proof}
Let $b \in [0, \tfrac14]$  and let $C_b$ be a shuffle of $M$
$$C_b = M\big(6, (b, \tfrac12 - b, \tfrac12, \tfrac12 + b, 1-b), (3,5,1,6,2,4), (1, 1, 1, 1, 1, 1)\big).$$
The copula $C_b$ is defined by the measure preserving bijection 
$$ h_{C_b}(u) =  \begin{cases}
            u + \tfrac12 - b; &  0 \le u \le b, \\
            u + \tfrac12; &  b < u \le \tfrac12 - b, \\
            u - \tfrac12 + b; &  \tfrac12 - b < u \le \tfrac12, \\
            u + \tfrac12 - b; &  \tfrac12 < u \le \tfrac12 + b, \\
            u - \tfrac12; &  \tfrac12 + b < u \le 1-b, \\
            u - \tfrac12 + b; &  1-b < u \le 1.
            \end{cases}
$$
We have 
$$ C_b(u,h_{C_b}(u)) =  \begin{cases}
            u; &  0 \le u \le \tfrac12 - b, \\
            h_{C_b}(u); &  \tfrac12 - b < u \le \tfrac12, \\
            u; &  \tfrac12 < u \le \tfrac12 + b, \\
            h_{C_b}(u); &  \tfrac12 + b < u \le 1.
            \end{cases}
$$
Using equation \eqref{eq:tau-h}, we obtain
$$  \tau(C_b) = 8b^2. $$
Furthermore, we have
$$ \delta_{C_b}(u) =  \begin{cases}
            0; &  0 \le u \le \tfrac12-b, \\
            2u - 1 + 2b; &  \tfrac12-b < u \le \tfrac12, \\
            2b; &  \tfrac12 < u \le \tfrac12+b, \\
            2u-1; &  \tfrac12+b < u \le 1, 
            \end{cases}
$$
and
$$ \omega_{C_b}(u) =  \begin{cases}
            u; &  0 \le u \le \tfrac14, \\
            \tfrac12 - u; &  \tfrac14 < u \le \tfrac12 - b, \\     
            u + 2b - \tfrac12; &  \tfrac12-b < u \le \tfrac12, \\
            2b + \tfrac12 - u; &  \tfrac12 < u \le \tfrac12 + b, \\
            u - \tfrac12; &  \tfrac12 + b < u \le \tfrac34, \\
            1-u; &  \tfrac34 < u \le 1, 
            \end{cases}
$$
Using equations \eqref{phi}  and \eqref{gamma}, we obtain
$$  \phi(C_b) = 12b^2-\tfrac12, \quad \text{and} \quad \gamma(C_b) = 16b^2-\tfrac12. $$
The mass distribution of copula $C_b$ and the graphs of functions $\delta_{C_b}$ and $\omega_{C_b}$ are shown in Figure~\ref{fig:lem F6}.

Now let $A_{a,b} = B_{a,C_b}$ be an ordinal sum of $\{C_b\}$ with respect to the interval $\{(a, 1-a)\}$. By Lemma \ref{lem:ordinal} we have
\begin{align*}
   \tau(A_{a,b}) &= 8(1-2a)^2b^2-4a^2+4a, \\
   \phi(A_{a,b}) &= 12(1-2a)^2b^2-6a^2+6a-\tfrac12, \text{ and} \\
   \gamma(A_{a,b}) &= 16(1-2a)^2b^2-6a^2+6a-\tfrac12.
\end{align*}
Note that 
\begin{align*}
  \tfrac23\phi(A_{a,b})+\tfrac13 &= \tfrac23\big(12(1-2a)^2b^2-6a^2+6a-\tfrac12\big)+\tfrac13  \\
  &= 8(1-2a)^2b^2-4a^2+4a = \tau(A_{a,b}).
\end{align*}
Since the point $(\phi(A_{a,b}), \gamma(A_{a,b}), \tau(A_{a,b}))$ lies in the polyhedron $\Omega$, this implies that it lies on the face $F_6$ for any $a\in [0, \tfrac12]$ and $b\in [0, \tfrac14]$.
Furthermore, $A_{\frac12,b} = M$, so that the point $(\phi(A_{\frac12,b}), \gamma(A_{\frac12,b}), \tau(A_{\frac12,b}))$ is the point $P_4$ of the polyhedron $\Omega$ for any $b\in [0, \tfrac14]$, 
$$   \tfrac43\phi(A_{0,b}) + \tfrac16  = \tfrac43\phi(C_b) + \tfrac16 = \tfrac43(12b^2-\tfrac12) + \tfrac16 = 16b^2-\tfrac12 = \gamma(C_b) = \gamma(A_{0,b}),$$
so that the point $(\phi(A_{0,b}), \gamma(A_{0,b}), \tau(A_{0,b}))$ lies on the edge $P_3P_6$ of the polyhedron $\Omega$ for any $b\in [0, \tfrac14]$, 
$$  \phi(A_{a,0}) = -6a^2+6a-\tfrac12 = \gamma(A_{a,0}),$$
so that the point $(\phi(A_{a,0}), \gamma(A_{a,0}), \tau(A_{a,0}))$ lies on the edge $P_3P_4$ of the polyhedron $\Omega$ for any $a\in [0, \tfrac12]$, and
$$  \tfrac23\phi(A_{a,\frac14})+\tfrac13 = \tfrac23(-3a^2+3a+\tfrac14) + \tfrac13 = -2a^2+2a+\tfrac12 = \gamma(A_{a,\frac14}),$$
so that the point $(\phi(A_{a,\frac14}), \gamma(A_{a,\frac14}), \tau(A_{a,\frac14}))$ lies on the edge $P_4P_6$ of the polyhedron $\Omega$ for any $a\in [0, \tfrac12]$. For any fixed $a\in [0, \tfrac12]$ the set of points 
$$\big\{(\phi(A_{a,b}), \gamma(A_{a,b}), \tau(A_{a,b})) \mid b\in [0, \tfrac14]\big\}$$ 
is a curve lying on the face $F_6$ and connecting a point on the edge $P_3P_4$ and a point on the edge $P_4P_6$. Since $\phi(A_{a,b}), \gamma(A_{a,b}), \tau(A_{a,b})$ are continuous as functions of $a,b$, this means that for any point $T$ on the face $F_6$ there exist $a\in [0, \tfrac12]$ and $b\in [0, \tfrac14]$ such that $(\phi(A_{a,b}), \gamma(A_{a,b}), \tau(A_{a,b})) = T$.
\end{proof}

\begin{figure}[ht]
\centering
\includegraphics{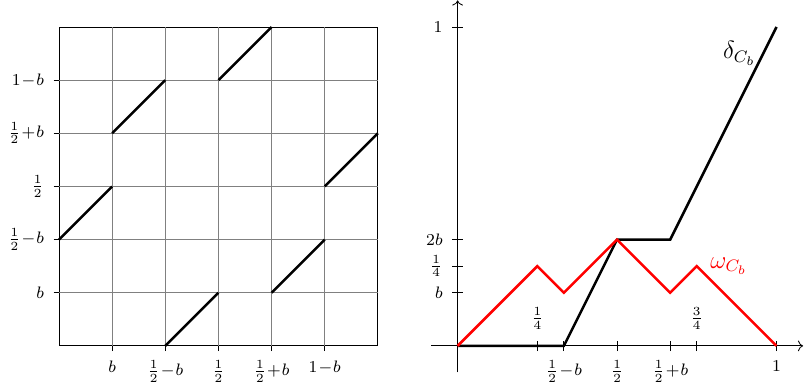}
    \caption{The mass distribution of copula $C_b$ (left) and the graphs of functions $\delta_{C_b}$ and $\omega_{C_b}$ (right) from the proof of Lemma \ref{lem:F6}.}
    \label{fig:lem F6}
\end{figure}

\begin{lemma}\label{lem:F2}
For any point $T$ on the face $F_2$ of the polyhedron $\Omega$
$$F_2 = P_1P_2P_5 = \big\{(\phi, \gamma, \tau) \in \RR^3 \mid  -\tfrac12 \le \phi \le \tfrac14,\ 2\phi \le \gamma \le \tfrac43 \phi+\tfrac16,\ \tau = -\tfrac23\phi+\gamma-\tfrac13\big\}$$
there exists a copula $C\in\CC$ such that $(\phi(C), \gamma(C), \tau(C)) = T$.
\end{lemma}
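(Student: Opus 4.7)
The plan is to reduce the lemma to the previously established Lemma~\ref{lem:F6} via the reflection symmetry captured by the linear involution $\mathcal{A}$ of Lemma~\ref{lem:linear_map_A} and identity \eqref{eq:mappingA}. By Lemma~\ref{lem:linear_map_A}, $\mathcal{A}$ preserves $\Omega$ and satisfies $\mathcal{A}(F_2) = F_6$, and by construction $\mathcal{A}$ is an involution on $\RR^3$.

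Fix a point $T \in F_2$. Then $\mathcal{A}(T) \in F_6$, so by Lemma~\ref{lem:F6} there exists a copula $D \in \CC$ such that
\[
(\phi(D), \gamma(D), \tau(D)) = \mathcal{A}(T).
\]
Setting $C := D^{\sigma_2}$, identity \eqref{eq:mappingA} yields
\[
(\phi(C), \gamma(C), \tau(C)) = \mathcal{A}(\phi(D), \gamma(D), \tau(D)) = \mathcal{A}(\mathcal{A}(T)) = T,
\]
which is exactly the claim.

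I do not anticipate any significant obstacle. The heavy lifting, namely the explicit construction of a two-parameter family of copulas whose $(\phi, \gamma, \tau)$ covers a triangular face, has already been carried out in Lemma~\ref{lem:F6}, and the $\sigma_2$-reflection transports that parametrization onto $F_2$ in one step. If instead one prefers an explicit construction mirroring Lemma~\ref{lem:F6}, one can simply take $C := A_{a,b}^{\sigma_2}$ for the ordinal sums $A_{a,b}$ constructed there, and verify the parametrization directly via Lemma~\ref{lem:ordinal} together with the identity $\phi(C^{\sigma_2}) = \phi(C) - \tfrac32 \gamma(C)$ from Lemma~\ref{lem:gamma-phi}; the continuity and boundary-edge checks (hitting edges $P_1P_2$, $P_1P_5$, and $P_2P_5$) then transport verbatim under $\mathcal{A}$ from those carried out for $P_3P_4$, $P_4P_6$, and $P_3P_6$ in Lemma~\ref{lem:F6}. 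This alternative, however, merely duplicates the work already done.
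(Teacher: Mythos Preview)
Your proof is correct and follows essentially the same approach as the paper: map $T\in F_2$ to $\mathcal{A}(T)\in F_6$ via Lemma~\ref{lem:linear_map_A}, invoke Lemma~\ref{lem:F6} to find a copula realizing $\mathcal{A}(T)$, and then reflect via $\sigma_2$ using \eqref{eq:mappingA} to obtain a copula realizing $T$. The paper's proof is exactly this argument, stated in three sentences.
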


\begin{proof}
Let $T(\phi, \gamma, \tau)$ be any point in $F_2$. Then the point $T'(\phi-\tfrac32 \gamma, -\gamma, -\tau) = \mathcal{A}(T)$ lies on the face $F_6$, by Lemma \ref{lem:linear_map_A}. By applying Lemma \ref{lem:F6} to the point $T'$, we can find a copula $C\in\CC$ such that $(\phi(C), \gamma(C), \tau(C)) = T'$. By equation~\eqref{eq:mappingA} we get $T=(\phi(C^{\sigma_2}), \gamma(C^{\sigma_2}), \tau(C^{\sigma_2}))$, which finishes the proof.
\end{proof}

\begin{lemma}\label{lem:F4}
For any point $T$ on the face $F_4$ of the polyhedron $\Omega$
$$F_4 = P_1P_4P_5 = \big\{(\phi, \gamma, \tau) \in \RR^3 \mid  -\tfrac12 \le \phi \le 1,\ \tfrac43 \phi - \tfrac13 \le \gamma \le \min\{2\phi, \tfrac23 \phi+\tfrac13\},\ \tau = \tfrac43 \phi-\tfrac13\big\}$$
there exists a copula $C\in\CC$ such that $(\phi(C), \gamma(C), \tau(C)) = T$.
\end{lemma}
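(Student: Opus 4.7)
The plan is to mirror the strategy of Lemma~\ref{lem:F6}: first produce a one-parameter family of copulas sweeping the edge of $F_4$ not containing $P_4$, namely $P_1P_5 = F_4 \cap F_2$, and then use the middle ordinal sum $B_{a,\cdot}$ of Lemma~\ref{lem:ordinal} to drag that edge toward $P_4$ and fill the triangle.

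For $b \in [0,\tfrac12]$ I would take
$$
D_b \;=\; M\bigl(2,\,(b),\,(1,2),\,(-1,-1)\bigr),
$$
the shuffle whose mass sits on the anti-diagonals of the two squares $[0,b]^2$ and $[b,1]^2$. One readily checks that $D_b$ coincides with the $M$-ordinal sum of $\{W,W\}$ with respect to $\{(0,b),(b,1)\}$, so by Propositions~\ref{prop:ordinal-phi} and~\ref{prop:ordinal-tau} (using $\phi(W)=-\tfrac12$ and $\tau(W)=\gamma(W)=-1$),
$$
\phi(D_b)=-\tfrac12+3b-3b^2,\qquad \tau(D_b)=-1+4b-4b^2,
$$
and Proposition~\ref{prop:ordinal-gamma} in its general case (with $j=2$ and $c=-b/(1-b)\in(-1,0)$, for which $W\bigl(t,1+c-t\bigr)$ vanishes identically on $[0,1+c]$) yields
$$
\gamma(D_b)=-1+6b-6b^2.
$$
Direct substitution shows that both $4\phi(D_b)-3\tau(D_b)=1$ and $-2\phi(D_b)+3\gamma(D_b)-3\tau(D_b)=1$ hold identically in $b$, so $D_b$ lies on $P_1P_5$; since $D_0=W$ realizes $P_1$ and $D_{1/2}$ realizes the $P_5$-copula of Lemma~\ref{lem:F6}'s checks, the family $\{D_b : b\in[0,\tfrac12]\}$ sweeps the entire edge $P_1P_5$.

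Next, set $A_{a,b}=B_{a,D_b}$ for $(a,b)\in[0,\tfrac12]^2$. Lemma~\ref{lem:ordinal} expresses $\phi(A_{a,b})$, $\gamma(A_{a,b})$ and $\tau(A_{a,b})$ each as $(1-2a)^2(\,\cdot\,)+4a-4a^2$, with $(\cdot)$ replaced by the corresponding invariant of $D_b$. The identity $4\phi(D_b)-3\tau(D_b)=1$ is then promoted to $4\phi(A_{a,b})-3\tau(A_{a,b})=(1-2a)^2+4a-4a^2=1$, so every $A_{a,b}$ lies on $F_4$. At $a=\tfrac12$ we have $A_{1/2,b}=M=P_4$; at $a=0$ we recover the edge $P_1P_5$; at $b=0$ the relation $\gamma(D_0)=\tau(D_0)$ is preserved by the linear ordinal-sum formulas, so $a\mapsto A_{a,0}$ traces the edge $P_1P_4$; and at $b=\tfrac12$ the relation $-2\phi(D_{1/2})+3\gamma(D_{1/2})=1$ is likewise preserved, so $a\mapsto A_{a,1/2}$ traces the edge $P_4P_5$.

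Hence for each fixed $a\in[0,\tfrac12]$ the curve $b\mapsto (\phi(A_{a,b}),\gamma(A_{a,b}),\tau(A_{a,b}))$ is a continuous arc in $F_4$ linking a point on $P_1P_4$ to a point on $P_4P_5$; as $a$ varies, these arcs sweep out the whole triangle $F_4$. Continuity of $\phi$, $\gamma$, $\tau$ in $(a,b)$ then produces a copula realizing any prescribed $T\in F_4$. The main obstacle is the choice of $D_b$: the constraints $D_0=W$, $D_{1/2}$ at $P_5$, and the trajectory staying on $P_1P_5$ are fairly rigid, and it takes some thought to see that the $M$-ordinal sum of $\{W,W\}$ is the correct \emph{discordant mirror} of the $C_b$ used in the proof of Lemma~\ref{lem:F6}.
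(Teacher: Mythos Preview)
Your argument is correct, and it takes a genuinely different route from the paper.

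The paper's proof builds a one-parameter family $D_b = M(3,(b,1-b),(1,2,3),(-1,1,-1))$ that traces the edge $P_4P_5$ (with $\phi(D_b)=1-3b^2$, $\gamma(D_b)=1-2b^2$, $\tau(D_b)=1-4b^2$), and then sweeps toward $P_1$ by a \emph{reflected} ordinal sum: setting $F_{a,b}=(B_{a,D_b^{\sigma_2}})^{\sigma_2}$, the $\sigma_2$-conjugation converts ``drift to $M$'' into ``drift to $W$''. Your construction instead places the base family on the opposite edge $P_1P_5$ via the ordinal sum of $\{W,W\}$ over $\{(0,b),(b,1)\}$, and then sweeps toward $P_4$ with the plain ordinal sum $B_{a,\cdot}$ of Lemma~\ref{lem:ordinal}. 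Both fill the triangle $F_4$ by the same continuity/curve-between-edges argument used in Lemma~\ref{lem:F6}; in fact, in $(\phi,\gamma)$-coordinates your curves for fixed $a$ are parallel line segments with direction $(1,2)$, so the covering is completely explicit.

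What each approach buys: yours is more streamlined---no $\sigma_2$-conjugation, and the invariants of $D_b$ come for free from Propositions~\ref{prop:ordinal-phi}, \ref{prop:ordinal-tau}, \ref{prop:ordinal-gamma} rather than from hand-computed diagonals. The paper's choice of $D_b$ is arguably more geometrically transparent (a small antidiagonal flip of $M$ at each corner) and avoids invoking the general case of Proposition~\ref{prop:ordinal-gamma}. One small slip: your aside that ``$D_{1/2}$ realizes the $P_5$-copula of Lemma~\ref{lem:F6}'s checks'' is not quite right---Lemma~\ref{lem:F6} concerns the face $F_6=P_3P_4P_6$ and never produces a copula at $P_5$. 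That said, you verified $(\phi,\gamma,\tau)(D_{1/2})=(\tfrac14,\tfrac12,0)=P_5$ directly, so the argument stands.
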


\begin{proof}
Let $b \in [0, \tfrac12]$  and let $D_b$ be a shuffle of $M$
$$D_b = M\big(3, (b, 1-b), (1, 2, 3), (-1, 1, -1)\big).$$
The copula $D_b$ is defined by the measure preserving bijection 
$$ h_{D_b}(u) =  \begin{cases}
            b - u; &  0 \le u \le b, \\
            u; & b < u \le 1-b, \\
            2 - b - u; &  1-b < u \le 1.
            \end{cases}
$$
We have 
$$ D_b(u,h_{D_b}(u)) =  \begin{cases}
            0; &  0 \le u \le b, \\
            u; &  b < u \le 1-b, \\
            1-b; &  1 - b < u \le 1.
            \end{cases}
$$
Using equation \eqref{eq:tau-h}, we obtain
$$  \tau(D_b) = 1-4b^2. $$
Furthermore, we have
$$ \delta_{D_b}(u) =  \begin{cases}
            0; &  0 \le u \le \tfrac{b}{2}, \\
            2u - b; &  \tfrac{b}{2} < u \le b, \\
            u; & b < u \le 1-b, \\
            1-b; &  1-b < u \le 1-\tfrac{b}{2}, \\
            2u-1; &  1-\tfrac{b}{2} < u \le 1, 
            \end{cases}
$$
and
$$ \omega_{D_b}(u) =  \begin{cases}
            u; &  0 \le u \le \tfrac12, \\
            1-u; &  \tfrac12 < u \le 1, 
            \end{cases}
$$
Using equations \eqref{phi}  and \eqref{gamma}, we obtain
$$  \phi(D_b) = 1-3b^2, \quad \text{and} \quad \gamma(D_b) = 1- 2b^2. $$
The mass distribution of copula $D_b$ and the graphs of functions $\delta_{D_b}$ and $\omega_{D_b}$ are shown in Figure~\ref{fig:lem F4}.

Now let $E_{a,b} = B_{a,D_b^{\sigma_2}}$ be an ordinal sum of $\{D_b^{\sigma_2}\}$ with respect to the interval $\{(a, 1-a)\}$ and $F_{a,b} = E_{a,b}^{\sigma_2}$. Using Lemmas \ref{lem:ordinal} and \ref{lem:gamma-phi} we obtain
\begin{align*}
   \tau(F_{a,b}) &=  (1-2a)^2\tau(D_b) + 4(a^2 - a) = -4(1-2a)^2b^2 + 8a^2 - 8a + 1, \\
   \phi(F_{a,b}) &= (1-2a)^2\phi(D_b) + 2(a^2 - a) = -3(1-2a)^2b^2 + 6a^2 - 6a + 1, \text{ and} \\
   \gamma(F_{a,b}) &=  (1-2a)^2\gamma(D_b) + 4(a^2 - a) = -2(1-2a)^2b^2 + 8a^2 - 8a + 1.
\end{align*}
Note that 
\begin{align*}
  \tfrac43\phi(F_{a,b})-\tfrac13 &= \tfrac43(-3(1-2a)^2b^2 + 6a^2 - 6a + 1)-\tfrac13  \\
  &= -4(1-2a)^2b^2 + 8a^2 - 8a + 1 = \tau(F_{a,b}).
\end{align*}
Since the point $(\phi(F_{a,b}), \gamma(F_{a,b}), \tau(F_{a,b}))$ lies in the polyhedron $\Omega$, this implies that it lies on the face $F_4$ for any $a, b\in [0, \tfrac12]$.
Furthermore, $F_{\frac12,b} = W$, so that the point $(\phi(F_{\frac12,b}), \gamma(F_{\frac12,b}), \tau(F_{\frac12,b}))$ is the point $P_1$ of the polyhedron $\Omega$ for any $b\in [0, \tfrac12]$, 
$$   \tfrac23\phi(F_{0,b}) + \tfrac13  = \tfrac23\phi(D_b) + \tfrac13 = \tfrac23(1-3b^2) + \tfrac13 = 1-2b^2 = \gamma(D_b) = \gamma(F_{0,b}),$$
so that the point $(\phi(F_{0,b}), \gamma(F_{0,b}), \tau(F_{0,b}))$ lies on the edge $P_4P_5$ of the polyhedron $\Omega$ for any $b\in [0, \tfrac12]$, 
$$ \tfrac43\phi(F_{a,0}) - \tfrac13 = \tfrac43(6a^2 - 6a + 1) - \tfrac13 = 8a^2 - 8a + 1 = \gamma(F_{a,0}),$$
so that the point $(\phi(F_{a,0}), \gamma(F_{a,0}), \tau(F_{a,0}))$ lies on the edge $P_1P_4$ of the polyhedron $\Omega$ for any $a\in [0, \tfrac12]$, and
$$  2\phi(F_{a,\frac12})= 6a^2 - 6a + \tfrac12 = \gamma(F_{a,\frac12}),$$
so that the point $(\phi(F_{a,\frac12}), \gamma(F_{a,\frac12}), \tau(F_{a,\frac12}))$ lies on the edge $P_1P_5$ of the polyhedron $\Omega$ for any $a\in [0, \tfrac12]$. For any fixed $a\in [0, \tfrac12]$ the set of points 
$$\big\{(\phi(F_{a,b}), \gamma(F_{a,b}), \tau(F_{a,b})) \mid b\in [0, \tfrac12]\big\}$$ 
is a curve lying on the face $F_4$ and connecting a point on the edge $P_1P_4$ and a point on the edge $P_1P_5$. Since $\phi(F_{a,b}), \gamma(F_{a,b}), \tau(F_{a,b})$ are continuous as functions of $a,b$, this means that for any point $T$ on the face $F_4$ there exist $a,b\in [0, \tfrac12]$ such that $(\phi(F_{a,b}), \gamma(F_{a,b}), \tau(F_{a,b})) = T$.
\end{proof}

\begin{figure}[ht]
    \centering
\includegraphics{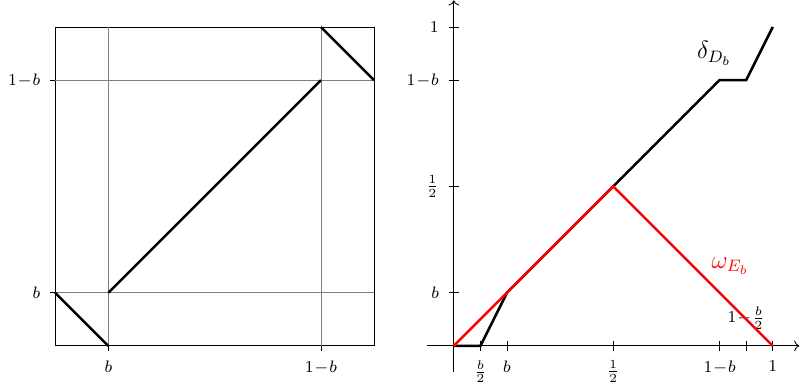}
    \caption{The mass distribution of copula $D_b$ (left) and the graphs of functions $\delta_{D_b}$ and $\omega_{D_b}$ (right) from the proof of Lemma \ref{lem:F4}.}
    \label{fig:lem F4}
\end{figure}

\begin{lemma}\label{lem:F3}
For any point $T$ on the face $F_3$ of the polyhedron $\Omega$
$$F_3 = P_1P_3P_4 = \big\{(\phi, \gamma, \tau) \in \RR^3 \mid -\tfrac12 \le \phi \le 1,\ \tfrac43 \phi - \tfrac13 \le \gamma \le \phi,\ \tau = -\tfrac43 \phi + 2\gamma +\tfrac13\big\}$$
there exists a copula $C\in\CC$ such that $(\phi(C), \gamma(C), \tau(C)) = T$.
\end{lemma}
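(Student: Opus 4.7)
The plan is to mimic the proof of Lemma \ref{lem:F2} from Lemma \ref{lem:F6}, exploiting the symmetry $\mathcal{A}$ established in Lemma \ref{lem:linear_map_A}, which shows $\mathcal{A}(F_4) = F_3$ (and $\mathcal{A}(F_3) = F_4$). Since Lemma \ref{lem:F4} has just been proved, the face $F_3$ can be obtained essentially for free by reflection.

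Concretely, I would let $T = (\phi, \gamma, \tau)$ be an arbitrary point on $F_3$ and form
$$T' = \mathcal{A}(T) = (\phi - \tfrac{3}{2}\gamma,\, -\gamma,\, -\tau).$$
By Lemma \ref{lem:linear_map_A}, $T'$ lies on $F_4$. Applying Lemma \ref{lem:F4} to $T'$, I obtain a copula $C \in \CC$ with $(\phi(C), \gamma(C), \tau(C)) = T'$. Then by \eqref{eq:mappingA},
$$(\phi(C^{\sigma_2}), \gamma(C^{\sigma_2}), \tau(C^{\sigma_2})) = \mathcal{A}(\phi(C), \gamma(C), \tau(C)) = \mathcal{A}(T') = \mathcal{A}^2(T) = T,$$
using that $\mathcal{A}$ is an involution (which is either immediate from the definition $\mathcal{A}(\phi,\gamma,\tau) = (\phi - \tfrac{3}{2}\gamma, -\gamma, -\tau)$, or follows from Lemma \ref{lem:linear_map_A} since $\mathcal{A}$ swaps the vertices of $\Omega$ in pairs). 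Thus $C^{\sigma_2}$ is the desired copula attaining $T$.

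There is no genuine obstacle here: the only thing to verify is that $\mathcal{A}$ is indeed an involution, which is a one-line calculation $\mathcal{A}^2(\phi,\gamma,\tau) = \mathcal{A}(\phi - \tfrac{3}{2}\gamma, -\gamma, -\tau) = (\phi - \tfrac{3}{2}\gamma - \tfrac{3}{2}(-\gamma), \gamma, \tau) = (\phi, \gamma, \tau)$. The proof is therefore a direct two-line application of Lemma \ref{lem:F4} and equation \eqref{eq:mappingA}, entirely parallel to how Lemma \ref{lem:F2} was deduced from Lemma \ref{lem:F6}.
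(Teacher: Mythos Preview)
Your proposal is correct and follows essentially the same approach as the paper: map $T\in F_3$ to $T'=\mathcal{A}(T)\in F_4$ via Lemma~\ref{lem:linear_map_A}, apply Lemma~\ref{lem:F4} to obtain a copula $C$ attaining $T'$, and conclude via \eqref{eq:mappingA} that $C^{\sigma_2}$ attains $T$. The paper's proof is exactly this two-line argument, parallel to how Lemma~\ref{lem:F2} was deduced from Lemma~\ref{lem:F6}.
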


\begin{proof}
By Lemma \ref{lem:linear_map_A}, linear mapping $\mathcal{A}$ maps $F_3$ to $F_4$, so for any point $T(\phi, \gamma, \tau)$ in $F_3$ the point $T'(\phi-\tfrac32 \gamma, -\gamma, -\tau) = \mathcal{A}(T)$ lies on the face $F_4$. By Lemma \ref{lem:F4}, there exists a copula $C\in\CC$ such that $(\phi(C), \gamma(C), \tau(C)) = T'$. Thus, by equation~\eqref{eq:mappingA} we have $(\phi(C^{\sigma_2}), \gamma(C^{\sigma_2}), \tau(C^{\sigma_2})) = T$.
\end{proof}

We are now ready to prove our main theorem.

\begin{theorem} \label{thm:main}
The exact region determined by Spearman's footrule, Gini's gamma and Kendall's tau
$$\Omega_{\phi, \gamma, \tau} = \big\{( \phi(C), \gamma(C), \tau(C)) \in [-\tfrac12, 1] \times [-1, 1] \times [-1, 1] \mid C \in \CC\big\}$$
is the polyhedron $\Omega$, defined in \eqref{eq:Omega}.
\end{theorem}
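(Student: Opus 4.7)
The plan is to prove $\Omega_{\phi,\gamma,\tau} = \Omega$ by the two inclusions. The inclusion $\Omega_{\phi,\gamma,\tau} \subseteq \Omega$ is essentially free: the seven linear inequalities defining $\Omega$ are precisely the bound $\phi(C)\ge -\tfrac12$, the two upper bounds on $\gamma$ in \eqref{eq:gamma-phi}, and the four bounds of Lemma~\ref{lem:bounds}, exactly as already noted in the discussion preceding the statement. I would record this in one sentence and move on.

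For the reverse inclusion, my plan is a cross-section plus a convex-combination homotopy. Fix $(\phi_0, \gamma_0, \tau_0) \in \Omega$ and freeze the first two coordinates. The $\tau$-extremes compatible with these,
\[
\tau_{\min} = \max\bigl\{\tfrac43\phi_0 - \tfrac13,\ -\tfrac23\phi_0 + \gamma_0 - \tfrac13\bigr\}, \quad
\tau_{\max} = \min\bigl\{\tfrac23\phi_0 + \tfrac13,\ -\tfrac43\phi_0 + 2\gamma_0 + \tfrac13\bigr\},
\]
put $(\phi_0, \gamma_0, \tau_{\min})$ on one of the ``lower'' faces $F_2$ or $F_4$, depending on which of the two lower bounds is dominant, and $(\phi_0, \gamma_0, \tau_{\max})$ on one of the ``upper'' faces $F_3$ or $F_6$. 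Lemmas~\ref{lem:F6}, \ref{lem:F2}, \ref{lem:F4} and \ref{lem:F3} then hand me copulas $C_{\min}$ and $C_{\max}$ realising these two boundary triples.

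The final step is the homotopy $C_t = (1-t)C_{\min} + tC_{\max}$, $t \in [0,1]$, which remains in $\CC$ because the class of copulas is convex. The key observation is that $\phi$ and $\gamma$ are \emph{linear} functionals on $\CC$ by formulas \eqref{phi} and \eqref{gamma}, so $\phi(C_t) \equiv \phi_0$ and $\gamma(C_t) \equiv \gamma_0$ along the whole segment, while $\tau(C_t) = 4\int_{\II^2} C_t \, dC_t - 1$ expands to a quadratic polynomial in $t$, hence is continuous, with $\tau(C_0) = \tau_{\min}$ and $\tau(C_1) = \tau_{\max}$. The intermediate value theorem then produces $t_0 \in [0,1]$ with $\tau(C_{t_0}) = \tau_0$, and $C_{t_0}$ is the desired witness.

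I do not expect a serious obstacle: the heavy lifting sits inside the four face lemmas already proved above, and the linearity of $\phi,\gamma$ makes the homotopy step essentially automatic. The only bookkeeping worth doing explicitly is checking the case split between $F_2$ and $F_4$ (respectively $F_3$ and $F_6$) and the degenerate situations $\tau_{\min} = \tau_{\max}$ over the two extreme projection points $(-\tfrac12,-1)$ and $(1,1)$, which are handled trivially by $C_{\min} = C_{\max} = W$ and $= M$ respectively.
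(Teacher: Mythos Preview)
Your proposal is correct and follows essentially the same approach as the paper: reduce the inclusion $\Omega \subseteq \Omega_{\phi,\gamma,\tau}$ to the four face lemmas by slicing vertically in the $\tau$-direction, then interpolate between the two boundary copulas via a convex combination, using linearity of $\phi$ and $\gamma$ and continuity of $\tau$ plus the intermediate value theorem. The only differences are cosmetic (you give slightly more detail on why $\tau(C_t)$ is continuous and on the degenerate endpoints).
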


\begin{proof}
If follows from equation \eqref{eq:gamma-phi} and Lemma \ref{lem:bounds} that $\Omega_{\phi, \gamma, \tau} \subseteq \Omega$. We need to prove that for any point $T \in \Omega$ there exists a copula $C\in\CC$ such that $(\phi(C), \gamma(C), \tau(C)) = T$. Let $T(a,b,c) \in \Omega$.
The vertical line $\{(a,b, \tau) \mid \tau \in \RR\}$ intersects polyhedron $\Omega$ in the line segment $AB$, where the point $A(a,b, \tau_0)$ lies in the union of faces $F_2$ and $F_4$ and the point $B(a,b, \tau_1)$ lies in the union of faces $F_3$ and $F_6$. By Lemmas \ref{lem:F6}, \ref{lem:F2}, \ref{lem:F4}, and \ref{lem:F3}, there exist copulas $C_0$ and $C_1$ such that $(\phi(C_0), \gamma(C_0), \tau(C_0)) = A(a,b, \tau_0)$ and $(\phi(C_1), \gamma(C_1), \tau(C_1)) = B(a,b, \tau_1)$.
Now for any $t \in \II$ let $C_t$ be a convex combination of copulas $C_0$ and $C_1$, i.e., $C_t = tC_1 + (1-t)C_0$. We have $\phi(C_t) = t\phi(C_1) + (1-t)\phi(C_0) = a$ and $\gamma(C_t) = t\gamma(C_1) + (1-t)\gamma(C_0) = b$. Furthermore, $\tau(C_0) = \tau_0 \le c \le \tau_1 = \tau(C_1)$ and $\tau(C_t)$ is continuous as a function of $t$, so there exists $t \in \II$ such that $\tau(C_t) = c$, hence $T=(\phi(C_t), \gamma(C_t), \tau(C_t))$.
\end{proof}

Lemmas \ref{lem:F6}, \ref{lem:F2}, \ref{lem:F4}, and \ref{lem:F3} show that any point in the union of faces $F_2$, $F_3$, $F_4$, and $F_6$ can be attained by a shuffle of $M$. The following examples demonstrate that also the points on the remaining faces $F_1$, $F_5$, and $F_7$ are attained by shuffles of $M$.

\begin{example}\label{ex:F7}
Let $b \in [0, \tfrac14]$  and let $G_b$ be a shuffle of $M$
$$G_b = M\big(6, (b, \tfrac12 - b, \tfrac12, \tfrac12 + b, 1-b), (3, 2, 1, 6, 5, 4), (1, -1, 1, 1, -1, 1)\big).$$
The copula $G_b$ is defined by the measure preserving bijection 
$$ h_{G_b}(u) =  \begin{cases}
            u + \tfrac12 - b; &  0 \le u \le b, \\
            \tfrac12 - u; &  b < u \le \tfrac12 - b, \\
            u - \tfrac12 + b; &  \tfrac12 - b < u \le \tfrac12, \\
            u + \tfrac12 - b; &  \tfrac12 < u \le \tfrac12 + b, \\
            \tfrac32 - u; &  \tfrac12 + b < u \le 1-b, \\
            u - \tfrac12 + b; &  1-b < u \le 1.
            \end{cases}
$$
We have 
$$ G_b(u,h_{G_b}(u)) =  \begin{cases}
            u; &  0 \le u \le b, \\
            0; &  b < u \le \tfrac12 - b, \\
            h_{G_b}(u); &  \tfrac12 - b < u \le \tfrac12, \\
            u; &  \tfrac12 < u \le \tfrac12 + b, \\
            \tfrac12; &  \tfrac12 + b < u \le 1 - b, \\
            h_{G_b}(u); &  1 - b < u \le 1.
            \end{cases}
$$
Using equation \eqref{eq:tau-h}, we obtain
$$  \tau(G_b) = 8b^2. $$
Furthermore, we have
$$ \delta_{G_b}(u) =  \begin{cases}
            0; &  0 \le u \le \tfrac14, \\
            2u - \tfrac12; &  \tfrac14 < u \le \tfrac12, \\
            \tfrac12; &  \tfrac12 < u \le \tfrac34, \\
            2u-1; &  \tfrac34 < u \le 1, 
            \end{cases}
$$
and
$$ \omega_{G_b}(u) = \begin{cases}
            u; &  0 \le u \le \tfrac12, \\
            1-u; &  \tfrac12 < u \le 1, 
            \end{cases}
$$
The mass distribution of copula $G_b$ and the graphs of functions $\delta_{G_b}$ and $\omega_{G_b}$ are shown in Figure~\ref{fig:ex F7}.
Using equations \eqref{phi}  and \eqref{gamma}, we obtain
$$  \phi(G_b) = \tfrac14, \quad \text{and} \quad \gamma(G_b) = \tfrac12. $$

Now, for any $a \in [0, \tfrac12]$ let $H_{a,b} = B_{a,G_b}$ be an ordinal sum of $\{G_b\}$ with respect to the interval $\{(a, 1-a)\}$. 
By Lemma \ref{lem:ordinal} we have
\begin{align*}
   \tau(H_{a,b}) &= 8(1-2a)^2b^2-4a^2+4a, \\
   \phi(H_{a,b}) &= -3a^2+3a+\tfrac14, \text{ and} \\
   \gamma(H_{a,b}) &= -2a^2+2a+\tfrac12.
\end{align*}
Note that 
$$ \tfrac23\phi(H_{a,b})+\tfrac13 = \tfrac23(-3a^2+3a+\tfrac14)+\tfrac13  
= -2a^2+2a+\tfrac12 = \gamma(H_{a,b}),$$
so the point $(\phi(H_{a,b}), \gamma(H_{a,b}), \tau(H_{a,b}))$ lies on the face $F_7$ of the polyhedron $\Omega$ for any $a\in [0, \tfrac12]$ and $b\in [0, \tfrac14]$.
Furthermore, $H_{\frac12,b} = M$, so that the point $(\phi(H_{\frac12,b}), \gamma(H_{\frac12,b}), \tau(H_{\frac12,b}))$ is the point $P_4$ of the polyhedron $\Omega$ for any $b\in [0, \tfrac14]$, 
$$ \phi(H_{0,b}) = \phi(G_b) = \tfrac14 \quad \text{and} \quad \gamma(H_{0,b}) = \gamma(G_b) = \tfrac12,$$
so that the point $(\phi(H_{0,b}), \gamma(H_{0,b}), \tau(H_{0,b}))$ lies on the edge $P_5P_6$ of the polyhedron $\Omega$ for any $b\in [0, \tfrac14]$, 
$$  \tfrac43\phi(H_{a,0})-\tfrac13 = \tfrac43(-3a^2+3a+\tfrac14)-\tfrac13 = -4a^2+4a = \tau(H_{a,0}),$$
so that the point $(\phi(H_{a,0}), \gamma(H_{a,0}), \tau(H_{a,0}))$ lies on the edge $P_4P_5$ of the polyhedron $\Omega$ for any $a\in [0, \tfrac12]$, and
$$  \tfrac23\phi(H_{a,\frac14})+\tfrac13 = \tfrac23(-3a^2+3a+\tfrac14) + \tfrac13 = -2a^2+2a+\tfrac12 = \tfrac12(1-2a)^2-4a^2+4a= \tau(H_{a,\frac14}),$$
so that the point $(\phi(H_{a,\frac14}), \gamma(H_{a,\frac14}), \tau(H_{a,\frac14}))$ lies on the edge $P_4P_6$ of the polyhedron $\Omega$ for any $a\in [0, \tfrac12]$. Similarly as in the proof of Lemma~\ref{lem:F6} we show that for any point $T$ on the face $F_7$ there exist $a\in [0, \tfrac12]$ and $b\in [0, \tfrac14]$ such that $(\phi(H_{a,b}), \gamma(H_{a,b}), \tau(H_{a,b})) = T$.
\end{example}

\begin{figure}[ht]
\centering
\includegraphics{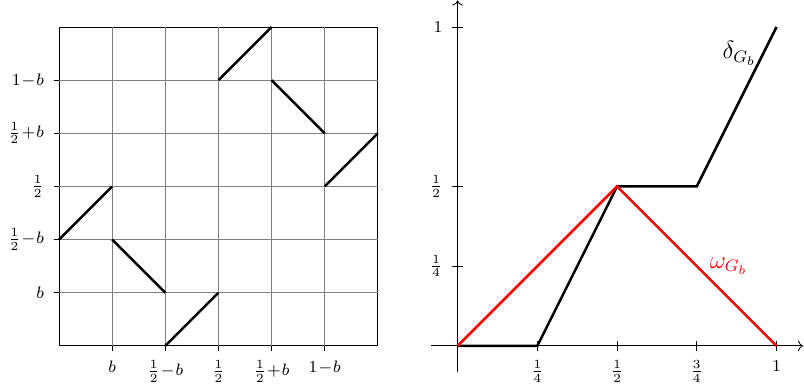}
    \caption{The mass distribution of copula $G_b$ (left) and the graphs of functions $\delta_{G_b}$ and $\omega_{G_b}$ (right) from Example \ref{ex:F7}.}
    \label{fig:ex F7}
\end{figure}

\begin{example}\label{ex:F1}
Consider the copula $K_{a,b} = H_{a,b}^{\sigma_2}$, where $H_{a,b}$ is the copula from Example~\ref{ex:F7}. 
Similarly as in the proof of Lemma~\ref{lem:F2}, we can prove that the set of points 
$$\big\{(\phi(K_{a,b}), \gamma(K_{a,b}), \tau(K_{a,b})) \mid (a,b) \in [0, \tfrac12]\times[0, \tfrac14]\big\}$$ 
is equal to the face $F_1 = P_1 P_2 P_3$ of polyhedron $\Omega$.
\end{example}

\begin{example}\label{ex:F5}
Let $a, b \in [0, \tfrac14]$ with $a \le b$ and let $L_{a,b}$ be a shuffle of $M$
\begin{align*}
    L_{a,b} = M\big(&10, (a, b, \tfrac12 - b, \tfrac12 - b + a, \tfrac12, \tfrac12 + b - a, \tfrac12 + b, 1-b, 1-a), \\
    &(7,5,8,10,2,9,1,3,6,4), (-1, 1, 1, -1, 1, 1, -1, 1, 1, -1)\big).
\end{align*}
The copula $L_{a,b}$ is defined by the measure preserving bijection 
$$ h_{L_{a,b}}(u) =  \begin{cases}
            \tfrac12 + b - u; &  0 \le u \le a, \\
            u + \tfrac12 - b; &  a < u \le b, \\
            u + \tfrac12; &  b < u \le \tfrac12 - b, \\
            \tfrac32 - b - u; &  \tfrac12 - b < u \le \tfrac12 - b + a, \\
            u - \tfrac12 + b; &  \tfrac12 - b + a < u \le \tfrac12, \\
            u + \tfrac12 - b; &  \tfrac12 < u \le \tfrac12 + b - a, \\
            \tfrac12 + b - u; &  \tfrac12 + b - a < u \le \tfrac12 + b, \\
            u - \tfrac12; &  \tfrac12 + b < u \le 1-b, \\
            u - \tfrac12 + b; &  1-b < u \le 1-a, \\
            \tfrac32 - b - u; &  1-a < u \le 1.
            \end{cases}
$$
We have 
$$ L_{a,b}(u,h_{L_{a,b}}(u)) =  \begin{cases}
            0; &  0 \le u \le a, \\
            u - a; &  a < u \le b, \\
            u; &  b < u \le \tfrac12 - b, \\
            \tfrac12 - b; &  \tfrac12 - b < u \le \tfrac12 - b + a, \\
            u - \tfrac12 + b - a; &  \tfrac12 - b + a < u \le \tfrac12, \\
            u - a; &  \tfrac12 < u \le \tfrac12 + b - a, \\
            0; &  \tfrac12 + b - a < u \le \tfrac12 + b, \\
            u - \tfrac12; &  \tfrac12 + b < u \le 1-b, \\
            u - \tfrac12 + b - a; &  1-b < u \le 1-a, \\
            \tfrac12 - b; &  1-a < u \le 1.
            \end{cases}
$$
Using equation \eqref{eq:tau-h}, we obtain
$$  \tau(L_{a,b}) = 16(b-a)^2 - 8b^2. $$
Furthermore, we have
$$ \delta_{L_{a,b}}(u) =  \begin{cases}
            0; &  0 \le u \le \tfrac12-b+a, \\
            2u - 1 + 2b - 2a; &  \tfrac12-b+a < u \le \tfrac12, \\
            2b - 2a; &  \tfrac12 < u \le \tfrac12+b-a, \\
            2u-1; &  \tfrac12+b-a < u \le 1, 
            \end{cases}
$$
and
$$ \omega_{L_{a,b}}(u) =  \begin{cases}
            u; &  0 \le u \le \tfrac14, \\
            \tfrac12 - u; &  \tfrac14 < u \le \tfrac12 - b + a, \\     
            u + 2b - 2a - \tfrac12; &  \tfrac12 - b + a < u \le \tfrac12, \\
            2b - 2a + \tfrac12 - u; &  \tfrac12 < u \le \tfrac12 + b - a, \\
            u - \tfrac12; &  \tfrac12 + b - a < u \le \tfrac34, \\
            1-u; &  \tfrac34 < u \le 1, 
            \end{cases}
$$
Using equations \eqref{phi}  and \eqref{gamma}, we obtain
$$  \phi(L_{a,b}) = 12(b-a)^2-\tfrac12, \quad \text{and} \quad \gamma(L_{a,b}) = 16(b-a)^2-\tfrac12. $$
The mass distribution of copula $L_{a,b}$ and the graphs of functions $\delta_{L_{a,b}}$ and $\omega_{L_{a,b}}$ are shown in Figure~\ref{fig:ex F5}.

Note that 
$$\tfrac43\phi(L_{a,b})+\tfrac16 = \tfrac43\big(12(b-a)^2-\tfrac12\big)+\tfrac16  
  = 16(b-a)^2-\tfrac12 = \gamma(L_{a,b}),$$
so the point $(\phi(L_{a,b}), \gamma(L_{a,b}), \tau(L_{a,b}))$ lies on the face $F_5$ for any $a, b\in [0, \tfrac14]$ with $a \le b$.
Furthermore, $L_{0,b} = C_b = A_{0,b}$,
so the point $(\phi(L_{0,b}), \gamma(L_{0,b}), \tau(L_{0,b}))$ lies on the edge $P_3P_6$ of the polyhedron $\Omega$ for any $b\in [0, \tfrac14]$, $\phi(L_{b,b}) = \gamma(L_{b,b}) = -\tfrac12$, so the point $(\phi(L_{b,b}), \gamma(L_{b,b}), \tau(L_{b,b}))$ lies on the edge $P_2P_3$ of the polyhedron $\Omega$ for any $b\in [0, \tfrac14]$, and
$$  \gamma(L_{a,\frac14}) = 16a^2-8a+\tfrac12 = \tau(L_{a,\frac14}),$$
so that the point $(\phi(L_{a,\frac14}), \gamma(L_{a,\frac14}), \tau(L_{a,\frac14}))$ lies on the line segment $P_2P_6$. For any fixed $b\in [0, \tfrac14]$ the set of points 
$$\big\{(\phi(L_{a,b}), \gamma(L_{a,b}), \tau(L_{a,b})) \mid a\in [0, b]\big\}$$ 
is a curve lying on the face $F_5$ and connecting a point on the edge $P_3P_6$ and a point on the edge $P_2P_3$. Since $\phi(L_{a,b}), \gamma(L_{a,b}), \tau(L_{a,b})$ are continuous as functions of $a,b$, this means that for any point $T$ on the triangle $P_2 P_3 P_6$ there exist $a, b \in [0, \tfrac14]$ and with $a \le b$ such that $(\phi(L_{a,b}), \gamma(L_{a,b}), \tau(L_{a,b})) = T$.

Finally, let $M_{a,b} = L_{a,b}^{\sigma_2}$. By Lemma~\ref{lem:linear_map_A} the linear mapping $\mathcal{A}$ maps triangle $P_2 P_3 P_6$ onto triangle $P_6 P_5 P_2$. Hence, by equation~\eqref{eq:mappingA}, for any point $T$ on the triangle $P_2 P_5 P_6$ there exist $a, b \in [0, \tfrac14]$ with $a \le b$ such that $(\phi(M_{a,b}), \gamma(M_{a,b}), \tau(M_{a,b})) = T$.
\end{example}

\begin{figure}[ht]
\centering
\includegraphics{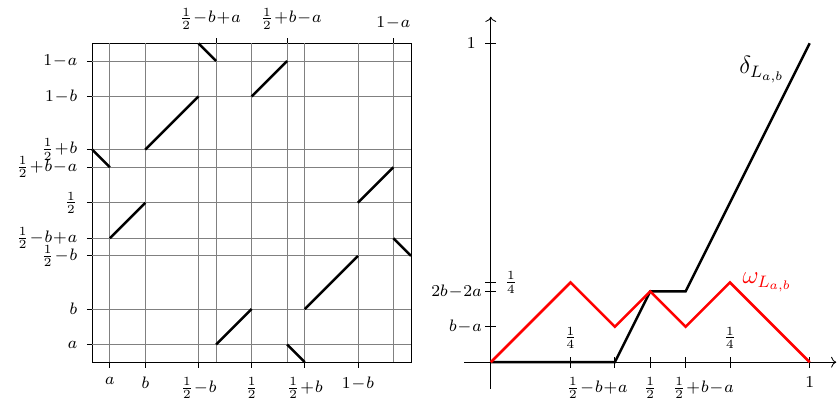}
    \caption{The mass distribution of copula $L_{a,b}$ (left) and the graphs of functions $\delta_{L_{a,b}}$ and $\omega_{L_{a,b}}$ (right) from Example \ref{ex:F5}.}
    \label{fig:ex F5}
\end{figure}

\section{Concluding remarks} \label{sec:concluding}

In this paper, we described the exact region $\Omega_{\phi, \gamma, \tau}$ determined by Spearman's footrule, Gini's gamma and Kendall's tau. It is a polyhedron $\Omega$ depicted in Figure~\ref{fig:polyhedron}, with $6$ vertices, $11$ edges and $7$ faces. We proved that every point on the boundary of $\Omega$ can be realized using a shuffle of $M$. 

The polyhedron $\Omega_{\phi, \gamma, \tau} $ has volume $\mathrm{vol}({\Omega_{\phi, \gamma, \tau} })=\frac{3}{16}=0.1875$, which represents $\frac{1}{32}=0.03125$ of the volume of the $(\phi,\gamma,\tau)$-box $[-\tfrac12,1] \times [-1,1] \times [-1,1]$.
In \cite{KoBuMo26}, the authors computed the volume of $\Omega_{\beta,\phi, \gamma}$, i.e., $\text{vol}(\Omega_{\beta,\phi, \gamma})=\frac{19}{40}=0.475$, which represents $\frac{19}{240} \approx 0.07916$ of the volume of the $(\beta,\phi,\gamma)$-box $[-1,1] \times[-\tfrac12,1] \times [-1,1]$.

Proposition \ref{pro:old-results} shows that for any copula $C$ with given value $\phi(C)$, the spread of $\tau(C)$, i.e., the difference between the maximal and the minimal possible value of $\tau(C)$, is at most $1$. On average, the spread of $\tau(C)$ given $\phi(C)$ is
$$\frac{\text{area}(\Omega_{\phi,\tau})}{\text{length}([-\frac12,1])}=\tfrac12,$$
since $\text{area}(\Omega_{\phi,\tau})=\frac34$, see \cite{KoBuSt}.
Similarly, given the value of $\gamma(C)$, the spread of $\tau(C)$ is at most $\frac23$. On average, the spread of $\tau(C)$ given $\gamma(C)$ is again
$$\frac{\text{area}(\Omega_{\gamma,\tau})}{\text{length}([-1,1])}=\tfrac12,$$
since $\text{area}(\Omega_{\gamma,\tau})=1$, see \cite{KoBuSt}.
If both $\phi(C)$ and $\gamma(C)$ are given, the maximal spread of $\tau(C)$ is still $\frac23$. Indeed, if $\phi(C)=\gamma(C)=0$, then $\tau(C)\in [-\frac13,\frac13]$ by Lemma \ref{lem:bounds}. The average spread of $\tau(C)$ given $\phi(C)$ and $\gamma(C)$ is
$$\frac{\mathrm{vol}({\Omega_{\phi, \gamma, \tau} })}{\text{area}(\Omega_{\phi,\gamma})}= \tfrac13,$$
since $\text{area}(\Omega_{\phi,\gamma})=\frac{9}{16}$, see \cite{KoBuMo}.
Using results from \cite{KoBuMo26}, the average spread of $\beta(C)$ given $\phi(C)$ and $\gamma(C)$ is
$$\frac{\mathrm{vol}({\Omega_{\beta,\phi, \gamma} })}{\text{area}(\Omega_{\phi,\gamma})}= \tfrac{38}{45}\approx 0.8444.$$
We conclude that the value of $\tau(C)$ is more closely related to the values of $\phi(C)$ and $\gamma(C)$ than the value of $\beta(C)$ is.

\section*{Acknowledgments}

The authors acknowledge financial support from the ARIS (Slovenian Research and Innovation Agency, research core funding No. P1-0222, research core funding No. P1-0448, and project J1-50002). The work of Petra Lazić was also supported by Croatian Science Foundation grant No. 2277.

\bibliographystyle{amsplain}
\bibliography{biblio}

\end{document}